\newtheorem{theorem}{Theorem}[section]
\newtheorem{proposition}[theorem]{Proposition}
\theoremstyle{definition}
\newtheorem{remark}[theorem]{Remark}
\newcommand{\ep}{\varepsilon}
\newcommand{\norm}[1]{\left\lVert#1\right\rVert}
\numberwithin{equation}{section}
\begin{document}

\title{A Space-time Nonlocal Traffic Flow Model: Relaxation Representation and Local Limit\thanks{This rsearch is supported in part by US NSF
DMS-1937254, DMS-2012562, and CNS-2038984.
}}

\author{Qiang 
Du\thanks{Department of Applied Physics and Applied Mathematics and Data Science Institute, Columbia University, New York, NY 10027, USA, Email: 
\textup{\nocorr
      \texttt{qd2125@columbia.edu}}}
\and 
 Kuang Huang\thanks{Department of Applied Physics and Applied Mathematics, Columbia University, New York, NY 10027, USA, Email: \textup{\nocorr
      \texttt{kh2862@columbia.edu}}
      } \textsuperscript{,}\thanks{Corresponding author.}
      \and James Scott\thanks{Department of Applied Physics and Applied Mathematics, Columbia University, New York, NY 10027, USA, Email: \textup{\nocorr
      \texttt{jms2555@columbia.edu}}
      }
      \and
Wen Shen\thanks{Department of  Mathematics, Pennsylvania State University, University Park, PA 16802, USA, Email: \textup{\nocorr
      \texttt{wxs27@psu.edu}}
      }
}
\date{}
\maketitle

{\let\thefootnote\relax\footnote{{ \emph{2020 Mathematics Subject Classification.} 35L65, 90B20, 35R09. }}}
{\let\thefootnote\relax\footnote{{ \emph{Key words.} Traffic flow modelling, nonlocal conservation law, time-delay, hyperbolic system with relaxation, nonlocal-to-local limit. }}}

\begin{center}
{\small \color{purple} This work has been published in \emph{Discrete and Continuous Dynamical Systems}, 2023, 43(9): 3456-3484. Please refer to the official publication for citation.}
\end{center}

\begin{abstract}
We propose and study a nonlocal conservation law modelling traffic flow in the existence of inter-vehicle communication. It is assumed that the nonlocal information travels at a finite speed and the model involves a space-time nonlocal integral of weighted traffic density. The well-posedness of the model is established under suitable conditions on the model parameters and by a suitably-defined initial condition.
In a special case where the weight kernel in the nonlocal integral is an exponential function, the nonlocal model can be reformulated as a $2\times2$ hyperbolic system with relaxation.
With the help of this relaxation representation, we show that the Lighthill-Whitham-Richards model is recovered in the equilibrium approximation limit.
\end{abstract}

%%%%%%%%%%%%%%%%%%%%%%%%%%%%%%%%%%%%%%%%%%%%%%%%%%%%%%
%                   6. BODY
%%%%%%%%%%%%%%%%%%%%%%%%%%%%%%%%%%%%%%%%%%%%%%%%%%%%%%

%%%%%%%%%%%%%%%%%%%%%%%%%%%%%%%%%%%%%%%
\section{Introduction}
%%%%%%%%%%%%%%%%%%%%%%%%%%%%%%%%%%%%%%%

\subsection{The nonlocal space-time traffic flow model}\label{sub:model}
We consider the following nonlocal conservation law modeling traffic flow
\begin{align}
  & \partial_t\rho(t,x) + \partial_x(\rho(t,x) v(q(t,x))) = 0, \quad  x\in \mathbb{R},\, t>0, \label{eq:model_nonlocal_1}\\
& \mbox{where}\qquad q(t,x) = \int_0^\infty \rho(t-\gamma s, x+s) w(s)\,ds. \label{eq:model_nonlocal_2}
\end{align}
Here, the quantity $\rho(t,x)\in[0,1]$ represents the traffic density, where $\rho=0$ indicates an empty road ahead and $\rho=1$ models bumper-to-bumper traffic jam.
The nonlocal quantity $q(t,x)$ is a weighted average of $\rho(t^\ast,x^\ast)$ along the space-time path
$$t^\ast=t-\gamma s, \quad x^\ast=x+s, \qquad \mbox{for}~s\in[0,\infty),$$
with an averaging kernel $w=w(s)$.
The vehicle velocity $v=v(q(t,x))$ depends on the nonlocal traffic density $q(t,x)$ through a decreasing function $v(\cdot)$.
The model \eqref{eq:model_nonlocal_1}-\eqref{eq:model_nonlocal_2} is the evolution associated to a past-time condition $\rho(t,x)$ given on the half plane $t\le 0$.

\subsection{Background and motivation}\label{sub:motivation}
The model \eqref{eq:model_nonlocal_1}-\eqref{eq:model_nonlocal_2} takes inspiration from the classical Lighthill-Whitham-Richards (LWR) model \cite{lighthill1955kinematic,richards1956shock}
\begin{align}\label{eq:local_model}
  \partial_t\rho(t,x) + \partial_x(\rho(t,x) v(\rho(t,x))) = 0,  \quad  x\in \mathbb{R},\, t>0,
\end{align}
in which the vehicle velocity $v=v(\rho(t,x))$ depends only on the local traffic density $\rho(t,x)$.
The LWR model \eqref{eq:local_model} is a scalar conservation law with the flux function $f(\rho) \doteq \rho v(\rho)$.
In the instance of inter-vehicle communication \cite{dey2016vehicle}, the flux may have a nonlocal dependence on traffic density in order to capture each vehicle's reaction to downstream traffic conditions. It is useful to incorporate time delays of this traffic density information in the distance \cite{yang2014control,keimer2019nonlocal}.
In \eqref{eq:model_nonlocal_1}-\eqref{eq:model_nonlocal_2}, we incorporate both nonlocal fluxes and time delays via velocities that depend on a weighted space-time average of the traffic density, assuming that the traffic density information travels at a constant speed $\gamma^{-1}$.

If the choice of rescaled weights $w_\ep(s) = \ep^{-1} w(s/\ep)$ is made, then formally the equations \eqref{eq:model_nonlocal_1}-\eqref{eq:model_nonlocal_2} converge to the local equation \eqref{eq:local_model} as $\varepsilon \to 0$.
The main goal of this paper is to demonstrate this in a rigorous manner via convergence of solutions.

There has recently been much research interest in nonlocal effects in phenomena described by conversation laws; there is a wide variety of applications but a dearth of analytical understanding.
Some application areas from which nonlocal conservation laws arise are traffic flows \cite{Lee2019,Lee2019a,chiarello2020micro,karafyllis2020analysis,jin2020generalized,Chiarello2018,chiarello2019junction,chiarello2019non}, sedimentation \cite{Betancourt2011}, pedestrian traffic \cite{Colombo2012,burger2020non}, material flow on conveyor belts \cite{Goettlich2014,rossi2020well}, and the numerical approximation of local conservation laws \cite{du2012new,du2017nonlocal,du2017numerical,fjordholm2021second}.

For several traffic flow models, the nonlocal mechanism is introduced in the flux term.
%see for instance \cite{Chiarello2018,chiarello2019junction,chiarello2019non,chiarello2020lagrangian,du2017nonlocal}.
One such %closely-related nonlocal
model that recovers the LWR model \eqref{eq:local_model} when the effect is localized
was proposed in \cite{Blandin2016,goatin2016well}:
\begin{align}
  & \partial_t\rho(t,x) + \partial_x(\rho(t,x) v(q(t,x))) = 0, \quad  x\in \mathbb{R},\, t>0, \label{eq:model_nonlocal_space_1}\\
  &\mbox{where} \qquad  q(t,x) = \int_0^\infty \rho(t, x+s) w(s)\,ds. \label{eq:model_nonlocal_space_2}
\end{align}
Various analytical aspects of this model have been investigated, including the existence and uniqueness of solutions \cite{Blandin2016,goatin2016well,bressan2019traffic}, existence and stability of traveling wave solutions \cite{ridder2019traveling,shen2018traveling}, development of numerical schemes \cite{goatin2016well,Chalons2018,friedrich2019maximum}, and stability analysis of the model in the case where the domain (road) is a closed ring \cite{huang2022stability}.
Convergence of solutions of \eqref{eq:model_nonlocal_space_1}-\eqref{eq:model_nonlocal_space_2} to  its local limit (which is the LWR model \eqref{eq:local_model}) was established in \cite{bressan2019traffic,bressan2020entropy} by way of an a priori BV estimate and an entropy estimate, both of which were obtained via reformulation of the nonlocal model as a $2\times 2$ relaxation system in the case of exponential weight kernels.
This is not the only mechanism that has been used to investigate the nonlocal-to-local limit; see the works of \cite{colombo2021local, keimer2019approximation, coclite2020general, colombo2022nonlocal,colombo2019singular,friedrich2022conservation}.

\subsection{Assumptions on the model}\label{sub:assumptions}
We conduct an analogous study of the nonlocal-to-local limit for the model
\eqref{eq:model_nonlocal_1}-\eqref{eq:model_nonlocal_2} with suitable choices of the functions $w,v$ and the past-time condition.
To fix ideas, we make the following assumptions on $w,v$:

\vspace{4pt}\noindent
\textbf{Assumption 1.} \textit{The velocity function $v\in \mathbf{C}^2([0,1])$ is strictly decreasing with $v(0) = v_{\mathrm{max}}$ and $v(1)=0$,
where $v_{\mathrm{max}}>0$ represents the maximum vehicle speed.}

\vspace{4pt}\noindent
\textbf{Assumption 2.} \textit{The weight kernel $w\in \mathbf{C}^1 ([0,\infty))$ is non-negative and
satisfies
\begin{align}\label{eq:kernel_decay}
\int_0^\infty w(s)\,ds=1 \quad \text{and} \quad w'(s)\leq -\beta w(s) \ \ \forall s\geq0
\end{align}
for a constant $\beta>0$.
}

The average density $q$ is taken along a space-time curve that requires traffic density data for all past times $t\leq0$. Therefore, the model
\eqref{eq:model_nonlocal_1}-\eqref{eq:model_nonlocal_2}
shall be equipped with a past-time condition on the lower half-plane, i.e.,
\begin{align}\label{eq:initial_negative}
  \rho(t,x)=\rho_{-}(t,x), \quad (t,x)\in(-\infty,0]\times\mathbb{R},
\end{align}
where $\rho_{-}\in\mathbf{L}^\infty((-\infty,0]\times\mathbb{R})$ is a given function.

\subsection{Main results}\label{sub:results}
Our first main result is the existence of Lipschitz solutions to the past-time value problem \eqref{eq:model_nonlocal_1}-\eqref{eq:model_nonlocal_2}-\eqref{eq:initial_negative} with Lipschitz past-time data $\rho_{-}$.
\begin{theorem}\label{thm:wellposedness_lipschitz}
  Suppose that Assumption 1 and Assumption 2 are satisfied and that
  \begin{equation}\label{eq:GammaSmallAssump}
  \gamma \leq \gamma_{\mathrm{max}} \doteq \min\left\{ \frac{1}{3(v_{\mathrm{max}}+\norm{v'}_\infty)}, \frac{\beta}{w(0) \norm{v'}_\infty} \right\}.
  \end{equation}
  Suppose that the past-time data $\rho_-$ is a bounded Lipschitz function belonging to the class $\mathcal{X}_{\mathrm{Lip},L}$; see definition \eqref{eq:Lip_IC_Def} below.
  Then the past-time value problem
  \eqref{eq:model_nonlocal_1}-\eqref{eq:model_nonlocal_2}-\eqref{eq:initial_negative} admits a solution $\rho$ that is Lipschitz continuous and satisfies \eqref{eq:model_nonlocal_1}-\eqref{eq:model_nonlocal_2}-\eqref{eq:initial_negative} pointwise.
  Furthermore, the solution $\rho$ satisfies the uniform bounds
  \begin{align}\label{eq:sol_low_upp_bounds}
    \rho_{\mathrm{min}} \leq \rho(t,x) \leq \rho_{\mathrm{max}} , \quad (t,x)\in[0,\infty)\times\mathbb{R},
  \end{align}
  where $\rho_{\mathrm{min}}$ and $\rho_{\mathrm{max}}$ are defined in \eqref{eq:sol_lower_bound}-\eqref{eq:sol_upper_bound} below and depend only on $\gamma$, $v$, $w$ and $\rho_-$.
\end{theorem}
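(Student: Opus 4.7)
The plan is to construct a global-in-time Lipschitz solution via a Picard-type iteration combined with the method of characteristics. I set up the iteration space as the closed set $K$ of continuous functions $\tilde\rho$ on $\mathbb{R} \times \mathbb{R}$ that coincide with $\rho_-$ on $(-\infty, 0] \times \mathbb{R}$, satisfy the uniform bounds $\rho_{\min} \le \tilde\rho \le \rho_{\max}$ on $[0, \infty) \times \mathbb{R}$, and have Lipschitz constants bounded by a constant $L^\ast$ chosen after inspecting the a priori estimates. For any $\tilde\rho \in K$, the nonlocal average $\tilde q$ defined by \eqref{eq:model_nonlocal_2} is well-defined and Lipschitz: Assumption 2 implies the exponential-type decay $w(s) \le w(0) e^{-\beta s}$, which guarantees convergence of the improper integral and allows Lipschitz bounds on $\tilde q$ in terms of those on $\tilde\rho$.

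Given $\tilde\rho \in K$, I would solve the associated linear transport equation $\partial_t \rho + v(\tilde q)\, \partial_x \rho = -\rho\, v'(\tilde q)\, \partial_x \tilde q$ via the method of characteristics. Defining the flow $X(\cdot; t, x)$ by $\dot X = v(\tilde q(\tau, X))$ with $X(t; t, x) = x$, the solution is explicitly
\begin{equation*}
\rho(t, x) = \rho_-(0, X(0; t, x))\, \exp\left( -\int_0^t v'(\tilde q)\, \partial_x \tilde q \,\Big|_{(\tau, X(\tau; t, x))} d\tau \right).
\end{equation*}
This defines the iteration map $\Phi: \tilde\rho \mapsto \rho$. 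The uniform bounds $\rho_{\min} \le \rho \le \rho_{\max}$ follow directly from this formula together with bounds on $v'$ and $\partial_x \tilde q$, yielding the explicit expressions for $\rho_{\min}, \rho_{\max}$ in terms of $\gamma$, $v$, $w$, and $\rho_-$.

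The crucial ingredient for closing the Lipschitz estimate is the integration-by-parts identity
\begin{equation*}
\partial_x \tilde q(t,x) - \gamma\, \partial_t \tilde q(t,x) = -\tilde\rho(t,x)\, w(0) - \int_0^\infty \tilde\rho(t - \gamma s, x + s)\, w'(s)\, ds,
\end{equation*}
obtained from $\partial_x \tilde\rho(t-\gamma s, x+s) = \tfrac{d}{ds}[\tilde\rho(t-\gamma s, x+s)] + \gamma\, \partial_t \tilde\rho(t-\gamma s, x+s)$ followed by integration by parts in $s$, using $w(\infty) = 0$. This identity allows one to control $\partial_x \tilde q$ in terms of $\|\tilde\rho\|_\infty$ and the time-Lipschitz constant of $\tilde q$, crucially avoiding any recourse to second derivatives of $\tilde\rho$. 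Combined with the characteristic formula, a Gronwall argument then yields control of the Lipschitz constant of $\rho$ in terms of that of $\tilde\rho$. I expect both parts of \eqref{eq:GammaSmallAssump} to play essential complementary roles: the bound $\gamma \le \beta/(w(0) \|v'\|_\infty)$ balances the boundary contribution $w(0)$ against the kernel decay rate $\beta$ in the above identity, while $\gamma \le 1/(3(v_{\max} + \|v'\|_\infty))$ functions as a CFL-type condition that produces a contraction factor strictly less than $1$ in the $L^\infty$ difference estimate between two iterates.

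The final step is to verify that $\Phi$ maps $K$ into itself and is a contraction in $L^\infty$, which then yields a fixed point $\rho \in K$ solving \eqref{eq:model_nonlocal_1}--\eqref{eq:model_nonlocal_2}--\eqref{eq:initial_negative} pointwise. The principal obstacle I anticipate is the uniform-in-time closure of the Lipschitz estimate: the nonlinear self-coupling through $\tilde q$ naturally amplifies Lipschitz norms along characteristics, and preventing blow-up requires delicately exploiting the decay property of $w$ together with both smallness conditions on $\gamma$ in tandem—neither condition alone suffices to close the fixed-point argument on the full time interval.
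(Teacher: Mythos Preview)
Your overall strategy (fixed point plus characteristics) matches the paper's, but there is a genuine gap in the step you pass over most quickly: the claim that the uniform bounds $\rho_{\min}\le\rho\le\rho_{\max}$ ``follow directly'' from the characteristic formula. Your formula
\[
\rho(t,x)=\rho_0(X(0;t,x))\exp\Big(-\int_0^t v'(\tilde q)\,\partial_x\tilde q\,d\tau\Big)
\]
only yields bounds of the form $\rho_0^{\min}e^{-Ct}\le\rho\le\rho_0^{\max}e^{Ct}$, which are time-dependent and do \emph{not} reproduce the specific quantities $\rho_{\min},\rho_{\max}$ defined in \eqref{eq:sol_lower_bound}--\eqref{eq:sol_upper_bound} (note these involve $g^{-1}$ with $g(\rho)=\rho(1+\gamma v(\rho))$). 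Without a time-independent $\mathbf{L}^\infty$ bound the continuation argument cannot close, and you certainly cannot run the contraction directly on $[0,\infty)\times\mathbb{R}$ as you propose.

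The paper avoids this by working not with $\rho$ but with the auxiliary variable $z=\rho(1+\gamma v(q))$, and with characteristics in the direction $\partial_y=\partial_x-\gamma\partial_t$ rather than $\partial_x$. In these variables the source term is $z\,\frac{-v'(q)}{(1+\gamma v(q))^2}\partial_y q$, and the integration-by-parts identity you wrote (which is really an identity for $\partial_y q$, not $\partial_x q$) gives
\[
\partial_y q = w(0)\Big(\int_0^\infty\rho\,\tilde w\,ds - \rho\Big),\qquad \tilde w=-w'/w(0).
\]
The condition $\gamma\le\beta/(w(0)\|v'\|_\infty)$ then ensures $\tilde w\ge\gamma\|v'\|_\infty w$, which after some algebra turns the characteristic ODE for $z$ into the differential inequality $\frac{d}{d\tau}z\le C(g(\rho_{\mathrm m})-z)$ with $\rho_{\mathrm m}=\sup\rho$. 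A comparison argument then shows $\rho_{\mathrm m}>\rho_{\max}$ is impossible. This is the real content of the invariant-region estimate, and it does not emerge from your formula for $\rho$ in $(t,x)$ coordinates; you would need to discover the role of $z$ (equivalently of $g$) to get there. Your closing paragraph correctly senses a difficulty but locates it in the Lipschitz estimate, whereas the delicate step is the $\mathbf{L}^\infty$ invariance. Once that is in hand, the paper runs the contraction on a \emph{short} interval $[0,T^\ast]$ and iterates; the uniform bound is exactly what allows the iteration to reach $t=\infty$.
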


Formally, as the time-delay parameter $\gamma$ approaches zero, the system \eqref{eq:model_nonlocal_1}-\eqref{eq:model_nonlocal_2} approaches the nonlocal-in-space system \eqref{eq:model_nonlocal_space_1}-\eqref{eq:model_nonlocal_space_2}. This is also true in a qualitative sense; each of the key estimates for \eqref{eq:model_nonlocal_1}-\eqref{eq:model_nonlocal_2} remain valid as $\gamma \to 0$, as the bounding constants neither vanish nor blow up.
Analogous statements of all of our results  hold for \eqref{eq:model_nonlocal_space_1}-\eqref{eq:model_nonlocal_space_2}, see \cite{bressan2019traffic}, and can be formally recovered from our results by taking $\gamma \to 0$. However, quantitatively stronger results hold for \eqref{eq:model_nonlocal_space_1}-\eqref{eq:model_nonlocal_space_2}. For example, the main estimates in Proposition \ref{prop:L1Stab:1} and Theorem \ref{thm:BV_bound} concern $\mathbf{L}^1$ estimates in space and time, whereas the analogous results for \eqref{eq:model_nonlocal_space_1}-\eqref{eq:model_nonlocal_space_2} hold for $\mathbf{L}^1$ in space and $\mathbf{C}^0$ in time; see again \cite{bressan2019traffic}.

The proof of Theorem \ref{thm:wellposedness_lipschitz} makes up Section~\ref{sec:existence_lip_sol}. We use a fixed point argument combined with the method of characteristics, which is heavily inspired by the proof of \cite{bressan2019traffic} for the existence of solutions to the nonlocal-in-space model.

In certain modelling applications, it might only be possible to gather the traffic data at a certain initial time.
In such a case, a natural choice of past-time data via the following extension of initial data:
\begin{align}
  \rho_{-}(t,x) &= \rho_0(x), \qquad (t,x) \in (-\infty,0] \times \mathbb{R},  \label{eq:extension_vertical}
\end{align}
for a given function $\rho_0 : \mathbb{R} \to \mathbb{R}$. We can then treat \eqref{eq:model_nonlocal_1}-\eqref{eq:model_nonlocal_2}-\eqref{eq:initial_negative} as an initial-value problem, since the quantity $q$
depends only on $t \in (0,\infty)$. To be precise, with \eqref{eq:extension_vertical}, the equation \eqref{eq:model_nonlocal_2} becomes
\begin{equation}\label{eq:model_nonlocal_3}
  q(t,x) = \int_0^{\infty} \rho_0 \Big( x+\frac{t}{\gamma}+s \Big) w \Big(s+\frac{t}{\gamma} \Big) ds + \int_0^{\frac{t}{\gamma}} \rho(t-\gamma s,x+s) w(s) ds.
\end{equation}
Under this consideration,
the equations \eqref{eq:model_nonlocal_1}-\eqref{eq:model_nonlocal_2}-\eqref{eq:initial_negative} where the past-time data is given by the equation \eqref{eq:extension_vertical} are equivalent to the Cauchy problem
\eqref{eq:model_nonlocal_1}-\eqref{eq:model_nonlocal_3} with the initial condition $\rho(0,x) = \rho_0(x)$.

For the particular choice \eqref{eq:extension_vertical} of past-time data, we establish the well-posedness of the Cauchy problem in the setting of weak solutions, which is our second main result.

\begin{theorem}\label{thm:WeakSolnExistence}
  Suppose that Assumption 1, Assumption 2 and \eqref{eq:GammaSmallAssump} are satisfied, and let $\rho_0(x)$ be a bounded function with finite total variation belonging to the class $\mathcal{X}$; see \eqref{eq:Vert_IC_Def} below. Then there exists a unique $\rho \in \mathbf{L}^{\infty}([0,\infty) \times \mathbb{R})$ satisfying \eqref{eq:sol_low_upp_bounds} that is a weak solution to \eqref{eq:model_nonlocal_1}-\eqref{eq:model_nonlocal_3} with initial condition $\rho(0,x) = \rho_0(x)$; %that is,
  in other words,
  $\rho$ satisfies
  \begin{equation}\label{eq:WeakSolnDef}
    \int_0^{\infty} \int_{\mathbb{R}} \rho \partial_t \varphi + \rho v(q) \partial_x \varphi \; dx dt + \int_{\mathbb{R}} \rho_0(x) \varphi(0,x) dx = 0
  \end{equation}
  for all $\varphi \in \mathbf{C}^1_{\mathrm{c}}([0,\infty) \times \mathbb{R})$ with $q$ defined by \eqref{eq:model_nonlocal_3}.

  Additionally, for any $T >0$ there exists a constant $C = C(\gamma,v,w,T)$ such that the following holds: Let $\rho_0^i(x)$, $i=1,2$, belong to $\mathcal{X}$ with  $\rho_0^1 - \rho_0^2 \in \mathbf{L}^1(\mathbb{R})$, and let
  $(\rho^i,q^i)$ denote the solution pairs associated to \eqref{eq:model_nonlocal_1}-\eqref{eq:model_nonlocal_3} with initial condition $\rho^i(0,x) = \rho_0^i(x)$. Then
  \begin{equation}
    \int_0^T  \int_{\mathbb{R}} |\rho^1(t,x)-\rho^2(t,x)| dx dt  \leq C (1 + \mathrm{TV}(\rho_0^1) + \mathrm{TV}(\rho_0^2) ) \int_{\mathbb{R}} |\rho_0^1(x)-\rho_0^2(x)| dx.
  \end{equation}
\end{theorem}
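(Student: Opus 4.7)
The plan is a standard approximation procedure: mollify the BV initial datum to get Lipschitz data, invoke Theorem \ref{thm:wellposedness_lipschitz} to produce corresponding Lipschitz solutions with vertically extended past-time data, and pass to the limit using the $\mathbf{L}^1$ stability estimate of Proposition \ref{prop:L1Stab:1} together with the uniform BV bound of Theorem \ref{thm:BV_bound}. Uniqueness and the quantitative stability bound will follow from the same machinery.

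First I would construct $\rho_0^\delta \in \mathcal{X}_{\mathrm{Lip},L(\delta)}$ by standard mollification, preserving the pointwise bounds, with $\mathrm{TV}(\rho_0^\delta) \leq \mathrm{TV}(\rho_0)$ and $\rho_0^\delta \to \rho_0$ in $\mathbf{L}^1_{\mathrm{loc}}(\mathbb{R})$. Theorem \ref{thm:wellposedness_lipschitz} applied with the vertical extension $\rho_-^\delta(t,x) = \rho_0^\delta(x)$ produces Lipschitz solutions $\rho^\delta$ of \eqref{eq:model_nonlocal_1}-\eqref{eq:model_nonlocal_2} satisfying \eqref{eq:sol_low_upp_bounds} uniformly in $\delta$; since each $\rho^\delta$ satisfies the PDE pointwise, it is in particular a weak solution of the Cauchy problem \eqref{eq:model_nonlocal_1}-\eqref{eq:model_nonlocal_3} in the sense of \eqref{eq:WeakSolnDef}. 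Applying Proposition \ref{prop:L1Stab:1} (combined with the BV bound of Theorem \ref{thm:BV_bound}) to pairs $\rho^{\delta_1}, \rho^{\delta_2}$ gives
\begin{equation*}
\int_0^T \int_{\mathbb{R}} |\rho^{\delta_1} - \rho^{\delta_2}|\, dx\, dt \leq C(1 + \mathrm{TV}(\rho_0)) \int_{\mathbb{R}} |\rho_0^{\delta_1} - \rho_0^{\delta_2}|\, dx,
\end{equation*}
so $\{\rho^\delta\}$ is Cauchy in $\mathbf{L}^1_{\mathrm{loc}}([0,T] \times \mathbb{R})$ and converges to some $\rho$ inheriting the uniform $\mathbf{L}^\infty$ bounds.

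To verify that $\rho$ is a weak solution, I would show $q^\delta \to q$ in $\mathbf{L}^1_{\mathrm{loc}}$. Writing the difference $q^\delta - q$ using \eqref{eq:model_nonlocal_3} and applying Fubini with the substitution $(t,s) \mapsto (t - \gamma s, x + s)$, the $\mathbf{L}^1([0,T] \times \mathbb{R})$ norm is controlled linearly by $\|\rho_0^\delta - \rho_0\|_{\mathbf{L}^1}$ and $\|\rho^\delta - \rho\|_{\mathbf{L}^1([0,T] \times \mathbb{R})}$, using only $\|w\|_{\mathbf{L}^1} = 1$. Since $v$ is Lipschitz on $[\rho_{\mathrm{min}}, \rho_{\mathrm{max}}]$'s image under $q$, we get $v(q^\delta) \to v(q)$ in $\mathbf{L}^1_{\mathrm{loc}}$, and uniform $\mathbf{L}^\infty$ bounds let us take the limit of $\rho^\delta v(q^\delta) \partial_x\varphi$ and of $\rho^\delta \partial_t\varphi$ termwise in \eqref{eq:WeakSolnDef}; the initial-data term passes because $\rho_0^\delta \to \rho_0$ in $\mathbf{L}^1_{\mathrm{loc}}$.

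For uniqueness and the stability estimate, I would run the approximation in parallel for two BV initial data $\rho_0^1, \rho_0^2$, apply Proposition \ref{prop:L1Stab:1} to the Lipschitz pair $(\rho^{1,\delta}, \rho^{2,\delta})$, and pass to the $\mathbf{L}^1_{\mathrm{loc}}$ limit; uniqueness is the case $\rho_0^1 = \rho_0^2$. The main conceptual obstacle is providing the $\mathbf{L}^1$-in-space-and-time stability at the Lipschitz level (Proposition \ref{prop:L1Stab:1}) — this is genuinely harder than in the purely spatially nonlocal model, where one gets $\mathbf{L}^1$ in space uniformly in $t$, as the authors remark. A secondary subtlety is that $q$ integrates $\rho$ along space-time characteristics rather than over horizontal slices, so convergence $q^\delta \to q$ requires the Fubini/change-of-variables argument described above; once framed correctly, this reduces to a clean bound by $\|\rho^\delta - \rho\|_{\mathbf{L}^1([0,T]\times\mathbb{R})}$.
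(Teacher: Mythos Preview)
Your approach is essentially the paper's: mollify $\rho_0$ to Lipschitz data, apply Theorem~\ref{thm:wellposedness_lipschitz} with the vertical extension, use Proposition~\ref{prop:L1Stab:1} to get a Cauchy sequence in $\mathbf{L}^1_{\mathrm{loc}}$, and pass to the limit in the weak formulation; uniqueness and the stability bound then follow because the constant in Proposition~\ref{prop:L1Stab:1} is independent of the Lipschitz constants. Your treatment of the convergence $q^\delta \to q$ via Fubini along the characteristic direction is a helpful detail the paper leaves implicit.

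One correction: drop the appeal to Theorem~\ref{thm:BV_bound}. The constant $\bar{C}$ in Proposition~\ref{prop:L1Stab:1} already depends only on $\mathrm{TV}(\rho_0^0)$ and $\mathrm{TV}(\rho_0^1)$ (the total variation of the \emph{initial data}, not of the solution), so no BV bound on the evolved solution is required to make the mollified sequence Cauchy. Invoking Theorem~\ref{thm:BV_bound} here is both unnecessary and illegitimate: it requires the exponential kernel, the additional structural condition~\eqref{eq:BV_condition_gamma}, and $\rho_{\min}>0$, none of which are hypotheses of Theorem~\ref{thm:WeakSolnExistence}; and its proof for general BV data in turn relies on Theorem~\ref{thm:WeakSolnExistence}, so the reference is circular. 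Once you remove that parenthetical, your argument matches the paper's.
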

The key tool used to prove Theorem \ref{thm:WeakSolnExistence} is the $\mathbf{L}^1$-stability of Lipschitz solutions; once that is established in Proposition \ref{prop:L1Stab:1}, the existence and uniqueness of weak solutions follows by using Theorem \ref{thm:wellposedness_lipschitz} and an approximation argument.

With the well-posedness of the problem \eqref{eq:model_nonlocal_1}-\eqref{eq:model_nonlocal_3} in hand, we analyze the nonlocal-to-local limit.
This limit is realized in the following way: consider the rescaled kernels
$w_\ep(s) = \ep^{-1} w(s/\ep)$.
Taking $\ep\to 0$, the kernel $w_\ep$ converges to a Dirac delta function,
and so -- formally -- solutions of the nonlocal model
\eqref{eq:model_nonlocal_1}-\eqref{eq:model_nonlocal_3} converge to the entropy admissible solution of the local model \eqref{eq:local_model}.
We make the choice of exponential kernel function for $w$, defined as
\begin{align}\label{eq:exp_kernel}
  w(s) = e^{-s}, \qquad
  w_\ep(s) = \ep^{-1} w(s/\ep) = \ep^{-1} e^{-s/\ep},\quad s\in[0,\infty).
\end{align}
With $w$ defined as in \eqref{eq:exp_kernel}, the model \eqref{eq:model_nonlocal_1}-\eqref{eq:model_nonlocal_3} (and more generally \eqref{eq:model_nonlocal_1}-\eqref{eq:model_nonlocal_2})
can be rewritten as a relaxation system:
\begin{align}
  \partial_t \rho + \partial_x (\rho v(q)) &=0, \label{model_relax_1}\\
  \partial_t q - \gamma^{-1}\partial_x q &= (\gamma\ep)^{-1} (\rho - q). \label{model_relax_2}
\end{align}
Utilizing the special features of this relaxation system formulation
\eqref{model_relax_1}-\eqref{model_relax_2},
a uniform global BV bound on $\rho$ that is independent of the relaxation parameter $\ep$
can be proved, which serves as a key estimate for the compactness theory and guarantees the existence of a limit of the solutions.

\begin{theorem}\label{thm:BV_bound}
  Suppose that Assumption 1, Assumption 2 and \eqref{eq:GammaSmallAssump} are satisfied, and let $\rho_0 \in \mathcal{X}$. Assume that the weight kernel is given by the exponential functions as in \eqref{eq:exp_kernel}.
  In addition, suppose that the minimum density $\rho_{\mathrm{min}}$ as defined in \eqref{eq:sol_low_upp_bounds} is positive, and that the following condition holds for $\gamma$ and $v$:
  \begin{align}\label{eq:BV_condition_gamma}
    \left( 1 - 2\gamma \norm{v'}_{\infty} \right) \min_{\rho\in[0,1]} |v'(\rho)| \geq (1 + \gamma v_{\mathrm{max}}) \norm{v''}_{\infty}.
  \end{align}
  Then the unique weak solution of \eqref{eq:model_nonlocal_1}-\eqref{eq:model_nonlocal_3} with initial condition $\rho(0,x) = \rho_0(x)$ satisfies
  \begin{align}\label{eq:uniform_BV_bound}
    \mathrm{TV}(\rho; [0,T]\times\mathbb{R}) \leq C T (1 + \rho_{\mathrm{min}}^{-1})  \mathrm{TV}(\rho_0) \quad \forall T>0,
  \end{align}
  where $\mathrm{TV}(\rho; [0,T]\times\mathbb{R})$ represents the total variation of $\rho$ on $[0,T]\times\mathbb{R}$, and the constant $C=C\left(\gamma,v\right)$ is independent of $\ep$.
\end{theorem}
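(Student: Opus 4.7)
The plan is to establish a spatial $L^1$-bound on $\rho_x$ that is uniform in $t \in [0,T]$ and in $\ep$, and then integrate in $t$ to recover the stated space-time BV estimate. By the regularity from Theorem~\ref{thm:wellposedness_lipschitz} I would first work with a Lipschitz approximation so that $U := \rho_x$ and $V := q_x$ exist a.e., and recover \eqref{eq:uniform_BV_bound} at the end by lower semicontinuity of total variation. Since \eqref{model_relax_1} gives $|\rho_t| \le (v_{\mathrm{max}} + \norm{v'}_\infty)(|U|+|V|)$, the space-time total variation is controlled by $\int_0^T ( \|U(t,\cdot)\|_{L^1_x} + \|V(t,\cdot)\|_{L^1_x} )\, dt$, so it suffices to bound these two spatial $L^1$-norms uniformly in $t$ and in $\ep$.

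Differentiating \eqref{model_relax_1}--\eqref{model_relax_2} in $x$, I obtain along the $\rho$-characteristics $\dot x = v(q)$
$$(\partial_t + v(q)\partial_x) U = -2 v'(q) U V - \rho v''(q) V^2 - \rho v'(q) V_x,$$
and along the $q$-characteristics $\dot x = -\gamma^{-1}$
$$(\partial_t - \gamma^{-1}\partial_x) V = (\gamma\ep)^{-1}(U - V).$$
Testing these equations against $\mathrm{sgn}(U)$ and $\mathrm{sgn}(V)$ and integrating in $x$, the singular relaxation term $(\gamma\ep)^{-1}(U-V)$ contributes at most $(\gamma\ep)^{-1}(\|U\|_{L^1_x}-\|V\|_{L^1_x})$ to $\frac{d}{dt}\|V\|_{L^1_x}$; that is, it is \emph{contractive} in $L^1$ and draws $\|V\|_{L^1_x}$ toward $\|U\|_{L^1_x}$ rather than producing an $\ep$-singular source. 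The awkward $\rho v'(q)V_x$ term in the $U$-equation is rewritten using the $V$-equation (effectively absorbing it into the relaxation contribution after an integration by parts), yielding a closed system of differential inequalities for $\|U\|_{L^1_x}$ and $\|V\|_{L^1_x}$.

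The crux is to close a Grönwall inequality of the form $\frac{d}{dt}(\|U\|_{L^1_x} + \lambda \|V\|_{L^1_x}) \le C (\|U\|_{L^1_x} + \lambda \|V\|_{L^1_x})$ with $\lambda$ and $C$ independent of $\ep$. Here condition \eqref{eq:BV_condition_gamma} is used exactly to force the first-order dissipative contribution from $v'(q)$ to dominate the second-order destabilizing contribution from $\rho v''(q) V^2$ (once $\|V\|_\infty$ is controlled by the Lipschitz bound of Theorem~\ref{thm:wellposedness_lipschitz} and the exponential kernel structure). The factor $\rho_{\min}^{-1}$ appears when the coupling is closed by quantities of the form $v'(q)/\rho$, where one must divide by the lower bound on $\rho$. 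Once the Grönwall inequality holds, integrating in $t$ over $[0,T]$ gives the linear-in-$T$ estimate \eqref{eq:uniform_BV_bound}.

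The main obstacle will be preventing the $(\gamma\ep)^{-1}$ factors from surviving the estimates. Pointwise, $\|V\|_\infty$ itself is expected to scale like $\ep^{-1}$, so any straightforward product bound would be catastrophic; the proof must leverage the contractive $L^1$-structure of the relaxation together with condition \eqref{eq:BV_condition_gamma} so that every formally $\ep^{-1}$ contribution cancels or is absorbed into the dissipation. Making this cancellation precise, and verifying that the exponential kernel is exactly what allows the $V$-equation to be local and contractive, will be the technical heart of the argument.
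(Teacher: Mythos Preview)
Your proposal has a genuine gap: the variables $(U,V)=(\rho_x,q_x)$ do not decouple the hyperbolic part, and this is exactly what prevents the $\ep^{-1}$ cancellation you need. In the $U$-equation the coupling enters through the flux term $\rho v'(q)\partial_x q$, so after differentiation you get $-\rho v'(q)V_x$ and the quadratic $-\rho v''(q)V^2$. Substituting $V_x=\gamma\partial_t V-\ep^{-1}(U-V)$ produces, besides a dissipative $-\ep^{-1}\rho|v'(q)||U|$ piece, a term of size $\ep^{-1}\rho|v'(q)||V|$ with the \emph{wrong sign}; combining with the relaxation inequality $\tfrac{d}{dt}\|V\|_{L^1}\le(\gamma\ep)^{-1}(\|U\|_{L^1}-\|V\|_{L^1})$ you can kill the singular terms with a weighted sum $\|U\|+\lambda\|V\|$ only if, roughly, $\rho_{\max}\|v'\|_\infty\le\rho_{\min}\min|v'|$, which never holds. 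Your handling of the quadratic term is also inconsistent: you first invoke an $\ep$-uniform bound on $\|V\|_\infty$ from Theorem~\ref{thm:wellposedness_lipschitz} (there is none; that Lipschitz constant blows up as $\ep\to0$) and later concede $\|V\|_\infty\sim\ep^{-1}$; in either reading, $\rho v''(q)V^2$ injects an uncontrolled $\ep^{-1}\|V\|_{L^1}$ contribution. Finally, even if a Gr\"onwall inequality closed with $C>0$, you would obtain an $e^{CT}$ bound, not the linear-in-$T$ estimate \eqref{eq:uniform_BV_bound}.

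What the paper does instead is pass to the Riemann invariants $u=\ln(\rho(1+\gamma v(q)))$ and $h=-\ln(1+\gamma v(q))$, which \emph{diagonalize} the principal part: $\partial_t u+v(q)\partial_x u=\ep^{-1}\Lambda(u,h)$ and $\partial_t h-\gamma^{-1}\partial_x h=-\ep^{-1}\Lambda(u,h)$, with the \emph{same} source $\Lambda$ appearing with opposite signs. Differentiating in $x$ now gives only linear terms $\ep^{-1}(\Lambda_u\,\partial_x u+\Lambda_h\,\partial_x h)$, and condition~\eqref{eq:BV_condition_gamma} is precisely what forces $\Lambda_u\le0$, $\Lambda_h\ge0$; this sign pattern yields an exact TV-diminishing inequality $\tfrac{d}{dt}\int(|\partial_x u|+|\partial_x h|)\,dx\le0$, with no $\ep$ anywhere. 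The factor $\rho_{\min}^{-1}$ enters not through ``$v'(q)/\rho$'' but through $\partial_x u=\rho^{-1}\partial_x\rho+\cdots$ when translating the bound back to $\rho$. The missing idea in your sketch is this diagonalization; without it, the off-diagonal flux coupling cannot be absorbed into the relaxation dissipation.
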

The choice \eqref{eq:exp_kernel} is the same as the one made in \cite{bressan2019traffic} to analyze the nonlocal-to-local limit for the nonlocal-in-space model \eqref{eq:model_nonlocal_space_1}-\eqref{eq:model_nonlocal_space_2}. Our methods closely follow theirs, but the relaxation system \eqref{model_relax_1}-\eqref{model_relax_2} is a genuine system of conservation laws in the original $(t,x)$-coordinate system, and we additionally take this into account.
We remark that, in the case of $\gamma=0$, the condition \eqref{eq:GammaSmallAssump} holds whenever $w'(s)\leq0 ~\forall s\in[0,+\infty)$, and \eqref{eq:BV_condition_gamma} becomes $\min_{\rho\in[0,1]} |v'(\rho)| \geq \norm{v''}_{\infty}$. These conditions on the functions $w,v$ are the same as the ones proposed in \cite{bressan2019traffic} for the nonlocal-in-space model \eqref{eq:model_nonlocal_space_1}-\eqref{eq:model_nonlocal_space_2}.

Finally, we show that any limit solution of the space-time nonlocal model \eqref{eq:model_nonlocal_1}-\eqref{eq:model_nonlocal_3} when $\ep\to0$ is the unique weak entropy solution of \eqref{eq:local_model}.

\begin{theorem}\label{thm:limit_sol_entropy}
  Under the same assumptions as in Theorem~\ref{thm:BV_bound}, let $\rho^\ep$ be the unique weak solution of \eqref{eq:model_nonlocal_1}-\eqref{eq:model_nonlocal_3} with initial condition $\rho^{\varepsilon}(0,x) = \rho_0(x)$ as in Theorem~\ref{thm:WeakSolnExistence}. Then the solution $\rho^\ep$ converges to the unique entropy solution of \eqref{eq:local_model} in $\mathbf{L}^1_{\mathrm{loc}}([0,\infty)\times\mathbb{R})$ as $\ep\to0$.
\end{theorem}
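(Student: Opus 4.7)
My plan is to follow the classical relaxation-to-equilibrium strategy: extract a limit using the uniform BV and $\mathbf{L}^\infty$ estimates, identify the limit as a weak solution, and then establish the Kruzhkov entropy condition by exploiting the viscous structure hidden in the relaxation system \eqref{model_relax_1}--\eqref{model_relax_2}. Uniqueness of Kruzhkov entropy solutions then upgrades subsequential convergence to convergence of the full family.

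\emph{Compactness and identification of the flux.} The uniform BV bound of Theorem~\ref{thm:BV_bound} combined with the $\ep$-independent $\mathbf{L}^\infty$ bound \eqref{eq:sol_low_upp_bounds} allows me to apply Helly's theorem with a diagonal extraction in $T$ to obtain a subsequence $\rho^\ep\to\rho$ in $\mathbf{L}^1_{\mathrm{loc}}([0,\infty)\times\mathbb{R})$. Changing variables $s=\ep\sigma$ in \eqref{eq:model_nonlocal_3} gives
\begin{align*}
q^\ep(t,x)-\rho^\ep(t,x) = \int_0^{t/(\gamma\ep)}\bigl[\rho^\ep(t-\gamma\ep\sigma, x+\ep\sigma)-\rho^\ep(t,x)\bigr]w(\sigma)\,d\sigma + E^\ep(t,x),
\end{align*}
where the boundary term $E^\ep$ (involving $\rho_0$) is exponentially small in $1/\ep$ for $t>0$ by the decay \eqref{eq:kernel_decay}. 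The BV translation estimate combined with the finite first moment of $w$ (also a consequence of \eqref{eq:kernel_decay}) gives $\|q^\ep - \rho^\ep\|_{\mathbf{L}^1_{\mathrm{loc}}} = O(\ep)$. Thus $q^\ep\to\rho$ and $\rho^\ep v(q^\ep)\to f(\rho)$ in $\mathbf{L}^1_{\mathrm{loc}}$, and passing to the limit in \eqref{eq:WeakSolnDef} identifies $\rho$ as a distributional solution of \eqref{eq:local_model} with datum $\rho_0$.

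\emph{Entropy condition (the main obstacle).} Since $\rho^\ep$ is Lipschitz by Theorem~\ref{thm:wellposedness_lipschitz}, pointwise computations are justified a.e. Setting $X^\ep\doteq\rho^\ep(v(\rho^\ep)-v(q^\ep))$, the nonlocal equation rewrites as $\partial_t\rho^\ep+\partial_x f(\rho^\ep)=\partial_x X^\ep$. Using \eqref{model_relax_2} to express $\rho^\ep-q^\ep=\gamma\ep\,\partial_t q^\ep-\ep\,\partial_x q^\ep$ and the approximate equilibrium $\partial_t q^\ep\approx-f'(\rho^\ep)\partial_x\rho^\ep$, a Chapman--Enskog-type expansion yields
\begin{align*}
X^\ep = \ep\, b(\rho^\ep)\,\partial_x\rho^\ep + \mathcal{R}^\ep, \qquad b(\rho)\doteq-\rho v'(\rho)\bigl(1+\gamma f'(\rho)\bigr) > 0,
\end{align*}
where positivity of $b$ follows from Assumption~1 and the smallness \eqref{eq:GammaSmallAssump}, and $\mathcal{R}^\ep$ is a higher-order remainder controlled by the uniform BV bound. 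For a smooth convex regularization $\eta_{k,\delta}$ of $|\rho-k|$ with associated flux $F_{k,\delta}$ defined by $F_{k,\delta}'=\eta_{k,\delta}'f'$, the Lipschitz chain rule gives
\begin{align*}
\partial_t\eta_{k,\delta}(\rho^\ep)+\partial_x F_{k,\delta}(\rho^\ep) = \ep\,\partial_x\bigl(\eta_{k,\delta}'(\rho^\ep)\,b(\rho^\ep)\,\partial_x\rho^\ep\bigr) - \ep\,b(\rho^\ep)\,\eta_{k,\delta}''(\rho^\ep)\,|\partial_x\rho^\ep|^2 + \widetilde{\mathcal{R}}^\ep,
\end{align*}
whose first term vanishes distributionally as the derivative of an $\mathbf{L}^1_{\mathrm{loc}}$-function of size $O(\ep)$, whose second term is non-positive since $b,\eta_{k,\delta}''\geq 0$, and whose remainder $\widetilde{\mathcal{R}}^\ep$ vanishes in the same sense. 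Sending $\ep\to 0$ first and then $\delta\to 0$ yields the Kruzhkov inequality $\partial_t|\rho-k|+\partial_x F(\rho,k)\leq 0$ distributionally for every $k\in\mathbb{R}$. Standard uniqueness for Kruzhkov entropy solutions of \eqref{eq:local_model} with bounded initial data then forces $\rho$ to equal the unique entropy solution, so the entire family $\rho^\ep$ (not just a subsequence) converges in $\mathbf{L}^1_{\mathrm{loc}}$.
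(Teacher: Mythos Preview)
Your compactness and flux-identification step is correct and essentially reproduces the paper's Theorem~\ref{thm:limit_sol}: the uniform BV bound gives $\mathbf{L}^1_{\mathrm{loc}}$ precompactness, and the translation estimate together with the finite first moment of $w$ yields $\|q^\ep-\rho^\ep\|_{\mathbf{L}^1_{\mathrm{loc}}}=O(\ep)$, so the limit is a weak solution of \eqref{eq:local_model}.

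The entropy step, however, has a genuine gap. The Chapman--Enskog decomposition $X^\ep=\ep\,b(\rho^\ep)\,\partial_x\rho^\ep+\mathcal{R}^\ep$ is only formal: it rests on the substitution $\partial_t q^\ep\approx-f'(\rho^\ep)\partial_x\rho^\ep$, which is precisely the equilibrium approximation you are trying to justify. After your integration by parts, the remainder contributing to $\widetilde{\mathcal{R}}^\ep$ contains the cross term $\ep\,\eta''_{k,\delta}(\rho^\ep)\,\partial_x\rho^\ep\,\bigl[\partial_y q^\ep-(1+\gamma f'(\rho^\ep))\partial_x\rho^\ep\bigr]$. Using $\rho^\ep=q^\ep-\ep\,\partial_y q^\ep$ and $\partial_t\rho^\ep+\partial_x f(\rho^\ep)=\partial_x X^\ep$ one finds $\partial_y q^\ep-(1+\gamma f'(\rho^\ep))\partial_x\rho^\ep=\ep\,\partial_y^2 q^\ep-\gamma\,\partial_x X^\ep$, so the cross term is $\ep$ times a product of first derivatives (or $\ep^2$ times a first-times-second derivative) with no sign. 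The only uniform bounds available are $\mathbf{L}^\infty$ and BV; they control $\partial_x\rho^\ep,\partial_y q^\ep$ in $\mathbf{L}^1$, but give no control on quantities like $\ep\int|\partial_x\rho^\ep|\,|\partial_y q^\ep-c\,\partial_x\rho^\ep|$, which need not vanish (in a shock layer of width $\ep$ one has $|\partial_x\rho^\ep|\sim\ep^{-1}$ and such integrals are $O(1)$). The signed dissipation $-\ep\,b\,\eta''_{k,\delta}|\partial_x\rho^\ep|^2$ cannot absorb this because the cross term has no sign and is of the same order.

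The paper's route is different and sidesteps this obstacle. Instead of Kruzhkov entropies it works with a single strictly convex entropy tailored to the relaxation structure,
\[
\eta(\rho)=\int_0^\rho r\bigl(1+\gamma v(r)\bigr)\,dr,\qquad \psi'(\rho)=\eta'(\rho)f'(\rho),
\]
so that $\eta'(\rho)=\rho(1+\gamma v(\rho))$. Multiplying $\partial_t\rho+\partial_x f(\rho)=\partial_x X^\ep$ by $\eta'(\rho)\varphi$ and using only the exact identity $\rho=q-\ep\,\partial_y q$ together with integration by parts, one arrives at a decomposition in which the sole ``dissipative'' contribution is
\[
-\ep\int(\rho^\ep+q^\ep)\,v'(q^\ep)\,(\partial_y q^\ep)^2\,\varphi\,dx\,dt\;\ge\;0,
\]
a perfect square with a definite sign, while every remaining term is bounded by $\|q^\ep-\rho^\ep\|_{\mathbf{L}^1_{\mathrm{loc}}}=O(\ep)$ alone; no products of derivatives ever appear. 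One strictly convex entropy suffices for uniqueness of the entropy solution, so this closes the argument. The idea you are missing is this special choice of entropy, which converts the problematic cross term into a square.
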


\subsection{Organization of the paper}\label{sub:organization}
This paper is organized as follows. First, we establish the existence of Lipschitz solutions from Lipschitz past-time data in Section~\ref{sec:existence_lip_sol} (Theorem~\ref{thm:wellposedness_lipschitz}). In Section \ref{sec:continuous_dependence} we establish the $\mathbf{L}^1$ stability estimate for Lipschitz solutions and prove Theorem \ref{thm:WeakSolnExistence}.
Section~\ref{sec:BV_bound} is devoted to the uniform BV bound estimate of solutions based on the model's relaxation system formulation (Theorem~\ref{thm:BV_bound}), which guarantees the existence of local limit solutions. Section~\ref{sec:entropy_admissbility} provides the proof of entropy admissibility of the local limit solution and completes the nonlocal-to-local limit theorem (Theorem~\ref{thm:limit_sol_entropy}).

%%%%%%%%%%%%%%%%%%%%%%%%%%%%%%%%%%%%%%%
\section{Existence of Lipschitz solutions}
\label{sec:existence_lip_sol}
%%%%%%%%%%%%%%%%%%%%%%%%%%%%%%%%%%%%%%%

This section is devoted to the proof of Theorem~\ref{thm:wellposedness_lipschitz}.

\subsection{Initial and past-time data}\label{sub:data}
To begin, we make precise the conditions on the past-time data.
First, the initial values of $\rho$ and $q$ corresponding to a past-time condition $\rho_-$ are denoted throughout the paper as
\begin{equation}\label{eq:rho_q_zero_def}
  \rho_0(x) \doteq \rho_-(0,x), \quad q_0(x) \doteq \int_0^\infty \rho_{-}(-\gamma s, x+s) w(s) \, ds, \qquad x\in\mathbb{R}.
\end{equation}
Second, for a given constant $L > 0$
we introduce the following notation for a class of functions for past-time data $\rho_-$ with $\rho_0$ and $q_0$ given by
\eqref{eq:rho_q_zero_def} correspondingly.
{\small \begin{equation}\label{eq:Lip_IC_Def}
\begin{split}
&\mathcal{X}_{\mathrm{Lip},L} \\&\doteq \left\{
\begin{gathered}
\rho_- \in \mathbf{L}^{\infty}((-\infty,0] \times \mathbb{R}) : \,
  \inf_{(t,x)\in(-\infty,0]\times\mathbb{R}}\rho_{-}(t,x) > 0, \; \sup_{(t,x)\in(-\infty,0]\times\mathbb{R}}\rho_{-}(t,x) < 1, \\
\qquad    \inf_{x\in\mathbb{R}}\rho_0(x) (1 + \gamma v(q_0(x))) > 0, \quad \sup_{x\in\mathbb{R}}\rho_0(x) (1 + \gamma v(q_0(x))) < 1, \\
\qquad    \sup_{(t,x)\in(-\infty,0]\times\mathbb{R}}|(\partial_x-\gamma \partial_t) \rho_{-}(t,x)| \leq L, \quad \sup_{x\in\mathbb{R}} | \partial_x(\rho_0(x)(1+\gamma v(q_0(x)))) | \leq L
\end{gathered}
\right\}.
\end{split}
\end{equation}}
Now we define
\begin{equation}\label{eq:g-def}
g(\rho) \doteq \rho (1+\gamma v(\rho)), \quad \rho\in[0,1].
\end{equation}
Under the Assumption 1, we have $g(0)=0$ and $g(1)=1$.
Moreover, it holds that $g'(\rho)>0$ for $\rho\in[0,1]$ provided $\gamma \norm{v'}_\infty < 1$.
In this case the function $g$ is monotone and we let $g^{-1}$ denote the inverse function of $g$. We define
\begin{align}
  & \rho_{\mathrm{min}} \doteq \min \left\{ \inf_{(t,x)\in(-\infty,0]\times\mathbb{R}}\rho_{-}(t,x), ~g^{-1}\left(\inf_{x\in\mathbb{R}}\rho_0(x)(1+\gamma v(q_0(x))\right) \right\}, \label{eq:sol_lower_bound} \\
  & \rho_{\mathrm{max}} \doteq \max \left\{ \sup_{(t,x)\in(-\infty,0]\times\mathbb{R}}\rho_{-}(t,x), ~g^{-1}\left(\sup_{x\in\mathbb{R}}\rho_0(x)(1+\gamma v(q_0(x))\right) \right\},\label{eq:sol_upper_bound}
\end{align}
where $\rho_0$ and $q_0$ are defined in \eqref{eq:rho_q_zero_def}. It is clear that $0 < \rho_{\mathrm{min}} \leq \rho_{\mathrm{max}} < 1$ for any $\rho_- \in \mathcal{X}_{\mathrm{Lip},L}$.

\subsection{Reformulation as a fixed-point problem}\label{sub:reformulation}

The essential idea in the proof of Theorem \ref{thm:wellposedness_lipschitz} is to reformulate the model as a fixed point problem and apply the contraction mapping theorem.
We first define the fixed point mapping on a proper domain with a finite time horizon, and then show it is contractive through a priori $\mathbf{L}^\infty$ and Lipschitz estimates.
The fixed point solution is shown to be a Lipschitz solution to the model and it can be extended to all times $t>0$.

First let us fix a time horizon $[0,T]$ where $T>0$, and suppose $\rho_{\mathrm{min}}, \rho_{\mathrm{max}}$,
are as defined in \eqref{eq:sol_lower_bound}-\eqref{eq:sol_upper_bound}.
For any
$0 \leq \rho_a < \rho_{\mathrm{min}}$ and $\rho_{\mathrm{max}} < \rho_b \leq 1$, we define the domain
\begin{equation*}
  \mathcal{D}_{T, L, \rho_a, \rho_b}
  \doteq  \left\{ \rho\in\mathbf{L}^\infty ([0,T]\times\mathbb{R}) \, : \,
  \begin{gathered}
    \rho_a \leq \rho(t,x) \leq \rho_b, \; (t,x) \in [0,T]\times\mathbb{R}, \\
    |(\partial_x-\gamma \partial_t)\rho(t,x)|\leq 3L, \; (t,x) \in (0,T)\times\mathbb{R}, \\
        \rho(0,x) = \rho_0(x), \; x \in \mathbb{R}
  \end{gathered}
  \right\}.
\end{equation*}

Then we introduce a directional derivative operator
\[
  \partial_y \doteq \partial_x-\gamma\partial_t,
\]
where the direction is taken along the line integral paths in \eqref{eq:model_nonlocal_2}, and an auxiliary variable
\[
    z \doteq \rho (1+\gamma v(q)).
\]
With the above definitions, the past-time value problem \eqref{eq:model_nonlocal_1}-\eqref{eq:model_nonlocal_2}-\eqref{eq:initial_negative} can be reformulated as a system to be solved on $[0,T]\times\mathbb{R}$.
\begin{align}
        & q(t,x) = \int_0^{t/\gamma} \rho(t-\gamma s, x+s) w(s)\, ds + \int_{t/\gamma}^\infty \rho_-(t-\gamma s, x+s) w(s)\, ds, \label{eq:model_z_q}\\
        & z(t,x) = \rho(t,x) (1+\gamma v(q(t,x))), \label{eq:def_z} \\
        & \partial_t z(t,x) + \partial_y \left( \frac{v(q(t,x))}{1+\gamma v(q(t,x))} z(t,x) \right) = 0. \label{eq:model_z}
\end{align}
This representation motivates the following step-by-step definition of a mapping $\Gamma:\, \mathcal{D}_{T, L, \rho_a, \rho_b} \to  \mathbf{L}^\infty([0,T]\times\mathbb{R}) $.
\begin{enumerate}
\item With a given $\rho_- \in \mathcal{X}_{\mathrm{Lip},L}$ and for any $\rho \in \mathcal{D}_{T, L, \rho_a, \rho_b}$, we define $q(t,x;\rho,\rho_-)$ for all $(t,x)\in[0,T]\times\mathbb{R}$ as in \eqref{eq:model_z_q}.
\item We define $z(t,x;\rho,\rho_-)$ for all $(t,x)\in[0,T]\times\mathbb{R}$ as the solution to the linear Cauchy problem \eqref{eq:model_z} with the above $q(t,x;\rho,\rho_-)$ and the initial condition
$$ z(0,x;\rho_-) = \rho_0(x) (1+\gamma v(q_0(x))). $$
\item
With $z(t,x;\rho,\rho_-)$ and $q(t,x;\rho,\rho_-)$ defined above, we define $\tilde{\rho}(t,x;\rho,\rho_-)$ as
\begin{align*}
  \tilde{\rho}(t,x;\rho,\rho_-) = \frac{z(t,x;\rho,\rho_-)}{1+\gamma v(q(t,x;\rho,\rho_-))}, \quad (t,x) \in [0,T]\times\mathbb{R}.
\end{align*}
\end{enumerate}
Finally we define the mapping $\Gamma$ by
$$ \Gamma[\rho](t,x) \doteq \tilde{\rho}(t,x;\rho,\rho_-), \quad (t,x) \in [0,T]\times\mathbb{R}. $$

The outline of the proof of Theorem~\ref{thm:wellposedness_lipschitz} is
to establish the following facts:
\begin{itemize}
\item For any $\rho \in \mathcal{D}_{T, L, \rho_a, \rho_b}$, $\Gamma[\rho] \in \mathcal{D}_{T, L, \rho_a, \rho_b}$;
\item $\Gamma$ is a contraction mapping on $\mathcal{D}_{T, L, \rho_a, \rho_b}$ in the $\mathbf{L}^\infty$ norm;
\item The contraction mapping theorem gives the unique fixed point $\rho \in \mathcal{D}_{T, L, \rho_a, \rho_b}$, i.e.\ $\Gamma[\rho]=\rho$;
\item The fixed point solution is Lipschitz and it solves the system \eqref{eq:model_z_q}-\eqref{eq:def_z}-\eqref{eq:model_z} for $t\in[0,T]$;
\item By continuation, the constructed solution for $t\in[0,T]$ can be extended to $t\in[0,\infty)$ and so it solves the past-time value problem \eqref{eq:model_nonlocal_1}-\eqref{eq:model_nonlocal_2}-\eqref{eq:initial_negative}.
\end{itemize}

We remark here that the map $\Gamma$ as constructed requires no relation between $\rho$ and $\rho_-$ at $t=0$ to hold. However, the condition $\rho(0,x)=\rho_0(x)$ is imposed so that quantities such as $q(t,x)$ are Lipschitz with appropriate constant.

\subsection{Proof of Theorem~\ref{thm:wellposedness_lipschitz}}\label{sub:proof1}
The proof consists of
six steps. In the proof, we %will
omit the notations $\rho$ and $\rho_-$ in $q(t,x;\rho,\rho_-)$, $z(t,x;\rho,\rho_-)$ and $\tilde{\rho}(t,x;\rho,\rho_-)$ for simplicity, but keep in mind that they both depend on $\rho$ and $\rho_-$.
In addition, we use the equation \eqref{eq:model_nonlocal_2} for $q$ to simplify the calculation, but keep in mind that the nonlocal integral for $q$ involves $\rho_-$ and its precise form is \eqref{eq:model_z_q}.

\vspace{4pt}\noindent
\textbf{Step 1 (Characteristics).} We rewrite the linear Cauchy problem \eqref{eq:model_z} as
\begin{align}\label{eq:model_z_transport}
  \partial_t z + \frac{v(q)}{1+\gamma v(q)} \partial_y z = z \frac{-v'(q)}{(1+\gamma v(q))^2} \partial_y q.
\end{align}
Given $z(0,x)$ for $x\in\mathbb{R}$ and $q(t,x)$ for $(t,x)\in[0,T]\times\mathbb{R}$, \eqref{eq:model_z_transport} can be solved by the method of characteristics. For a point $(t,x)$, the characteristic curve is given by $\tau \mapsto (\tau - \gamma \xi(\tau),\xi(\tau))$ where $\xi(\tau)$ satisfies
\begin{align}\label{eq:char}
    \frac{d\xi(\tau)}{d\tau} = \frac{v(q(\tau-\gamma\xi(\tau), \xi(\tau)))}{1+\gamma v(q(\tau-\gamma\xi(\tau), \xi(\tau)))},
    \qquad
    \xi(t+\gamma x) = x, \qquad \tau \in \mathbb{R}\,.
\end{align}
It is easy to see that by definition of $\rho_{\mathrm{min}}$ and $\rho_{\max}$ that
\begin{align}\label{eq:q_bounds}
  0 \leq \rho_a \leq q(t,x) \leq \rho_b \leq 1, \quad (t,x) \in [0,T]\times\mathbb{R}.
\end{align}
This implies
\[
0\leq \frac{d\xi}{d \tau} (\tau) \leq v_{\mathrm{max}}\qquad\mbox{ and }\qquad
\frac{d}{d \tau} [\xi - \gamma \xi(\tau) ] \geq1-\gamma v_{\mathrm{max}}>0\]
for all characteristic curves. Therefore, for any given point $(t,x)\in[0,T]\times\mathbb{R}$ one can trace the characteristic curve back to reach a unique point $(0,x')$ on the $x$-axis, and $x' - \frac{t}{\gamma} \leq x' \leq x$.
Integrating the characteristic ODE
\begin{align}\label{eq:char_eq}
&  \frac{d}{d\tau}z(\tau-\gamma\xi(\tau), \xi(\tau))\notag\\&=z(\tau-\gamma\xi(\tau), \xi(\tau))\frac{-v'(q(\tau-\gamma\xi(\tau), \xi(\tau)))}{(1+\gamma v(q(\tau-\gamma\xi(\tau), \xi(\tau))))^2}\cdot \partial_y q(\tau-\gamma\xi(\tau), \xi(\tau)),
\end{align}
from the unique $\tau_0$ satisfying $(\tau_0 - \gamma \xi(\tau_0),\xi(\tau_0)) = (0,x')$ to $\tau_1=t+\gamma x$, one can obtain the value of $z(t,x)$.

\vspace{4pt}\noindent
\textbf{Step 2 ($\mathbf{L}^\infty$ and directional Lipschitz bounds).}
We first note that
the identity
\begin{align*}
  \partial_y q(t, x)& = \int_0^{t/\gamma} \partial_y \rho(t-\gamma s, x+s) w(s)\, ds + \int_{t/\gamma}^\infty \partial_y \rho_-(t-\gamma s, x+s) w(s)\, ds,\\  &\qquad  (t,x)\in[0,T]\times\mathbb{R},
\end{align*}
gives
$$
|\partial_y q(t,x)| \leq 3L
$$ for all $(t,x)\in[0,T]\times\mathbb{R}$. In addition, integration by parts gives
\begin{align}\label{eq:estim_D2q}
  \partial_{yy} q(t, x) = -w(0)\partial_y\rho(t,x) - \int_0^\infty \partial_y\rho(t-\gamma s, x+s) w'(s)\,ds,
\end{align}
hence
$$
|\partial_{yy} q(t,x)| \leq 6 w(0)L
$$ for all $(t,x)\in[0,T]\times\mathbb{R}$.

To give a $\mathbf{L}^\infty$ bound on $\tilde{\rho}$, we note that
\[
    g(\rho_a) < g(\rho_{\mathrm{min}}) \leq z(0,x) \leq g(\rho_{\mathrm{max}}) < g(\rho_b), \quad x\in\mathbb{R}.
\]
By integrating \eqref{eq:char_eq} and using the uniform bound
\begin{align*}
    \norm{ \frac{-v'(q)}{(1+\gamma v(q))^2}\partial_y q }_\infty &\leq 3 \norm{v'}_\infty L,
\end{align*}
we obtain that
\begin{align*}
    g(\rho_a) \leq z(t,x) \leq g(\rho_b), \quad (t,x)\in[0,T]\times\mathbb{R},
\end{align*}
when $T$ is sufficiently small. This together with \eqref{eq:q_bounds} gives
\begin{align}\label{eq:tilde_rho_Linf_bound}
    \rho_a \leq \tilde{\rho}(t,x) \leq \rho_b, \quad (t,x)\in[0,T]\times\mathbb{R}.
\end{align}

Now let us give a bound on $\partial_y\tilde{\rho}$. Taking the directional derivative $\partial_y$ of the equation \eqref{eq:model_z_transport}, we obtain
\begin{align}\label{eq:char_eq_lip}
  \partial_t(\partial_y z) + &\frac{v(q)}{1+\gamma v(q)} \partial_y(\partial_y z) = \partial_y z \frac{-2v'(q)}{(1+\gamma v(q))^2} \partial_y q \notag\\
  &+ z \left[ \frac{2\gamma(v'(q))^2-v''(q)(1+\gamma v(q))}{(1+\gamma v(q))^3}(\partial_y q)^2 + \frac{-v'(q)}{(1+\gamma v(q))^2}\partial_{yy} q \right].
\end{align}
At time $t=0$, by the equation \eqref{eq:model_z_transport} we write
\begin{align*}
    \partial_y z(0,x) = (1+\gamma v(q(0,x))) \partial_x z(0,x) + z(0,x) \frac{\gamma v'(q(0,x))}{1 + \gamma v(q(0,x))} \partial_y q(0,x),
\end{align*}
using that $\rho_- \in \mathcal{X}_{\mathrm{Lip},L}$ we have
\begin{align*}
  |\partial_y z(0,x)| \leq \left( 1 + \gamma v_{\mathrm{max}} + \gamma \norm{v'}_\infty \right) L \leq \frac43 L,\quad x\in\mathbb{R}.
\end{align*}
We integrate the equation \eqref{eq:char_eq_lip} along the characteristic curves defined in \eqref{eq:char}. With the uniform bounds
\begin{align*}
    \norm{ \frac{-2v'(q)}{(1+\gamma v(q))^2}\partial_y q }_\infty &\leq 6 \norm{v'}_\infty L, \\
  \norm{ z \frac{2\gamma(v'(q))^2-v''(q)(1+\gamma v(q))}{(1+\gamma v(q))^3} (\partial_y q)^2 }_\infty &\leq \left( 2\gamma\norm{v'}_\infty^2+\norm{v''}_\infty \right) 9 L^2 \cdot g(\rho_b), \\
  \norm{ z\frac{-v'(q)}{(1+\gamma v(q))^2}\partial_{yy} q  }_\infty & \leq 6 w(0)\norm{v'}_\infty L \cdot  g(\rho_b),
\end{align*}
we deduce from a comparison argument that
\begin{align}\label{eq:lip_bound}
    \sup\nolimits_{x\in\mathbb{R}}|\partial_y z(t,x)| \leq Z(t), \quad t\in[0,T],
\end{align}
where $Z(t)$ is the solution to the linear ODE
\begin{align}\label{eq:lip_ode}
  \dot{Z} = a Z + b, \quad Z(0) = \frac43 L,
\end{align}
with constant coefficients given by
\begin{align*}
a \doteq \frac{6 \norm{v'}_\infty L}{1-\gamma v_{\mathrm{max}}}, \qquad
b \doteq \frac{9 \left( 2\gamma\norm{v'}_\infty^2+\norm{v''}_\infty \right) L^2 + 6w(0) \norm{v'}_\infty L}{1-\gamma v_{\mathrm{max}}}.
\end{align*}
By choosing $T$ sufficiently small, we obtain that $|\partial_y z(t,x)| \leq Z(T) \leq 2 L$ for $(t,x)\in[0,T]\times\mathbb{R}$. Then the identity
\begin{align*}
 & \partial_y \tilde{\rho}(t,x)\\& = \frac{(1+\gamma v(q(t,x))) \partial_y z(t,x)-\gamma z(t,x) v'(q(t,x)) \partial_y q(t,x)}{(1+\gamma v(q(t,x)))^2}, \quad (t,x)\in[0,T]\times\mathbb{R},
\end{align*}
implies that
\begin{align}\label{eq:tilde_rho_lip_bound}
  |\partial_y\tilde{\rho}(t,x)| \leq 2L + 3 \gamma\norm{v'}_\infty L \leq 3 L, \quad (t,x)\in[0,T]\times\mathbb{R}.
\end{align}

The equality $\tilde{\rho}(0,x) = \rho_0(x)$ is clear from the definition. Using the obtained $\mathbf{L}^\infty$ and directional Lipschitz bounds \eqref{eq:tilde_rho_Linf_bound}-\eqref{eq:tilde_rho_lip_bound}, we conclude that there exist $L,T>0$, depending only on $\gamma,v,L_0,\rho_{\mathrm{min}}, \rho_{\mathrm{max}}$, such that $\Gamma$ maps $\mathcal{D}_{T, L, \rho_a, \rho_b}$ to itself.
 
\vspace{4pt}\noindent 
\textbf{Step 3 (Contraction).}
For any $\rho_1,\rho_2 \in \mathcal{D}_{T, L, \rho_a, \rho_b}$,
and any $(t,x)\in[0,T]\times\mathbb{R}$, we denote by
\begin{align*}
  \begin{cases}
    t_1(\tau)=\tau-\gamma\xi_1(\tau) \\ x_1(\tau)=\xi_1(\tau)
  \end{cases} \text{and} \quad
  \begin{cases}
    t_2(\tau)=\tau-\gamma\xi_2(\tau) \\ x_2(\tau)=\xi_2(\tau)
  \end{cases} ,
\end{align*}
the two characteristic curves satisfying
\begin{align*}
  \frac{d\xi_i(\tau)}{d\tau} = \frac{v(q_i(\tau-\gamma\xi_i(\tau), \xi_i(\tau)))}{1+\gamma v(q_i(\tau-\gamma\xi_i(\tau), \xi_i(\tau)))}, \quad \xi_i(t+\gamma x)=x, \quad i=1,2.
\end{align*}
We have
\begin{align*}
  -\frac{d|\xi_1(\tau)-\xi_2(\tau)|}{d\tau} &\leq \norm{v'}_\infty \left| q_1(\tau-\gamma\xi_1(\tau), \xi_1(\tau)) - q_2(\tau-\gamma\xi_2(\tau), \xi_2(\tau)) \right|, \\
  &\leq \norm{v'}_\infty \left( \norm{\partial_y q_1}_\infty|\xi_1(\tau)-\xi_2(\tau)| + \norm{q_1-q_2}_\infty \right), \\
  &\leq \norm{v'}_\infty \left( 3L |\xi_1(\tau)-\xi_2(\tau)| + \norm{q_1-q_2}_\infty \right).
\end{align*}
Using Gr\"onwall's inequality backward in time, we obtain
\begin{align}\label{eq:contraction_xi_difference}
  |\xi_1(\tau)-\xi_2(\tau)| \leq C_0T \norm{q_1-q_2}_\infty, \quad 0\leq t_1(\tau), t_2(\tau)\leq t,
\end{align}
with the constant $C_0=C_0(L,v)>0$.

Note that $z_1$ and $z_2$ can be solved from
\begin{align*}
  \partial_tz_i + \frac{v(q_i)}{1+\gamma v(q_i)} \partial_y z_i=z_i\frac{-v'(q_i)}{(1+\gamma v(q_i))^2} \partial_y q_i, \quad i=1,2,
\end{align*}
along the characteristic curves $(t_i(\tau),x_i(\tau))$, $i=1,2$ with the same initial condition. Using again Gr\"onwall's inequality and noticing \eqref{eq:contraction_xi_difference}, we obtain
\begin{align*}
  \norm{z_1-z_2}_\infty\leq C_1 T \norm{q_1-q_2}_\infty,
\end{align*}
with the constant $C_1=C_1\left(L,\gamma,v,\norm{\partial_y^2q_1}_\infty,\norm{\partial_y^2q_2}_\infty\right)>0$.

We have
\begin{align*}
 & \tilde{\rho}_1(t,x) - \tilde{\rho}_2(t,x)\\& = \frac{z_1(t,x)(1+\gamma v(q_2(t,x)))-z_2(t,x)(1+\gamma v(q_1(t,x)))}{(1+\gamma v(q_1(t,x)))(1+\gamma v(q_2(t,x)))}, \quad (t,x)\in [0,T]\times\mathbb{R},
\end{align*}
which implies
\begin{align*}
  \norm{\tilde{\rho}_1-\tilde{\rho}_2}_\infty \leq \gamma \norm{v'}_\infty\norm{q_1-q_2}_\infty + \norm{z_1-z_2}_\infty.
\end{align*}
We also have
\begin{align*}
  q_1(t,x)-q_2(t,x) = \int_0^{t/\gamma} (\rho_1(t-\gamma s,x+s)-\rho_2(t-\gamma s,x+s)) w(s)\,ds,
\end{align*}
which gives
$$
\norm{q_1-q_2}_\infty \leq \frac{w(0)T}{\gamma} \norm{\rho_1-\rho_2}_\infty.
$$
Apply \eqref{eq:estim_D2q} to both $\partial_{yy} q_1$ and $\partial_{yy} q_2$, one can get
\begin{align*}
  \norm{\partial_{yy} q_i}_\infty
  \leq 6w(0)L, \quad i=1,2.
\end{align*}

Thanks to the above estimates, we finally deduce that
\begin{align*}
  \norm{\Gamma[\rho_1]-\Gamma[\rho_2]}_\infty = \norm{\tilde{\rho}_1 - \tilde{\rho}_2}_\infty \leq C_2T\norm{\rho_1-\rho_2}_\infty,
\end{align*}
with the constant $C_2=C_2(L,\gamma,v,w)>0$. Choosing $T$ sufficiently small such that $C_2T<1$, $\Gamma$ is a contraction mapping in the $\mathbf{L}^\infty$ norm.

By the contraction mapping theorem, there exists $T^* >0 $ such that $\Gamma$ has a unique fixed point in $\mathcal{D}_{T^*, L, \rho_a, \rho_b}$. From now on we denote $\rho$ as the unique solution in $\mathcal{D}_{T^*,L,\rho_a,\rho_b}$ that satisfies   \eqref{eq:model_nonlocal_1}-\eqref{eq:model_nonlocal_2}-\eqref{eq:initial_negative} on $[0,T^*] \times \mathbb{R}$. With this definition of $\rho$ we define $z$ by \eqref{eq:def_z}.

\vspace{4pt}\noindent
\textbf{Step 4 (Uniform $\mathbf{L}^\infty$ bound).}
We aim to show that $\rho$ and $z$ satisfy the uniform bounds
\begin{align}\label{eq:sol_uniform_bound}
  \rho_{\mathrm{min}} \leq \rho(t,x) \leq \rho_{\mathrm{max}} \quad \mathrm{and} \quad g(\rho_{\mathrm{min}}) \leq z(t,x) \leq g(\rho_{\mathrm{max}}), \quad (t,x)\in[0,T^*]\times\mathbb{R}.
\end{align}
We provide a proof for the upper bounds; the lower bounds are obtained in a similar manner.

We denote
\begin{align*}
    \rho_{\mathrm{m}} \doteq \sup_{(t,x)\in (-\infty,T^*]\times\mathbb{R}} \rho(t,x), \qquad z_{\mathrm{m}} \doteq \sup_{(t,x)\in[0,T^*]\times\mathbb{R}} z(t,x).
\end{align*}
It is clear that $\rho_{\mathrm{max}} \leq \rho_{\mathrm{m}} \leq 1$ and $g(\rho_{\mathrm{max}}) \leq z_{\mathrm{m}} \leq 1$.

Let us fix $x_0\in\mathbb{R}$ and consider the characteristic curve $\tau \mapsto (\tau - \gamma \xi(\tau),\xi(\tau))$ for $\tau\in[\tau_0,\tau_1]$ such that
\begin{equation*}
    (\tau_0 - \gamma \xi(\tau_0),\xi(\tau_0)) = (0,x_0), \quad (\tau_1 - \gamma \xi(\tau_1),\xi(\tau_1)) = (T^*,x_1),
\end{equation*}
where $(T^*,x_1)$ is the intersection of the characteristic curve and the horizontal line $t=T^*$.
For any $\tau\in[\tau_0,\tau_1]$, the equation \eqref{eq:model_z_transport} gives
\begin{align*}
  \frac{d}{d\tau} z(\tau - \gamma \xi(\tau),\xi(\tau)) = \left. z \frac{-v'(q)}{(1+\gamma v(q))^2} \partial_y q \right|_{(\tau - \gamma \xi(\tau),\xi(\tau))} .
\end{align*}
Integrating by parts gives
\begin{align}
  &\partial_y q(\tau - \gamma \xi(\tau),\xi(\tau))\notag \\&= \int_0^\infty \partial_y\rho(\tau - \gamma \xi(\tau) -\gamma s, \xi(\tau) + s) w(s)\,ds \notag\\
  &= -w(0) \rho(\tau - \gamma \xi(\tau),\xi(\tau)) -\int_0^\infty\rho(\tau - \gamma \xi(\tau) -\gamma s, \xi(\tau) + s) w'(s)\,ds \notag\\
    &= w(0)\left[ \int_0^\infty \rho(\tau - \gamma \xi(\tau) -\gamma s, \xi(\tau) + s) \tilde{w}(s)\,ds - \rho(\tau - \gamma \xi(\tau),\xi(\tau)) \right], \label{eq:estim_Dq_2}
\end{align}
where
\[
\tilde{w}(s) \doteq -w'(s)/w(0)
\]
is a new weight kernel satisfying
\begin{align*}
\int_0^\infty \tilde{w}(s)\,ds=1 \quad \mbox{and}\quad
  \tilde{w}(s)\geq \frac{\beta}{w(0)}w(s)\geq\gamma\norm{v'}_\infty w(s)\geq0 \quad \text{ for } s\in[0,\infty).
\end{align*}
Noting that
\begin{align*}
  v(q)=v(q)-v(\rho_{\mathrm{m}})+v(\rho_{\mathrm{m}}) \leq \norm{v'}_\infty(\rho_{\mathrm{m}}-q)+v(\rho_{\mathrm{m}}) \quad \forall q\in[0,\rho_{\mathrm{m}}],
\end{align*}
we compute
\begin{align*}
  & z(\tau - \gamma \xi(\tau),\xi(\tau)) \\
  =& \rho(\tau - \gamma \xi(\tau),\xi(\tau)) (1+\gamma v(q(\tau - \gamma \xi(\tau),\xi(\tau))))\\
  \leq& \rho(\tau - \gamma \xi(\tau),\xi(\tau)) + \gamma \rho_{\mathrm{m}} v(q(\tau - \gamma \xi(\tau),\xi(\tau)))\\
  \leq& \rho(\tau - \gamma \xi(\tau),\xi(\tau)) + \gamma \rho_{\mathrm{m}} \norm{v'}_\infty(\rho_{\mathrm{m}}-q(\tau - \gamma \xi(\tau),\xi(\tau))) + \gamma \rho_{\mathrm{m}} v(\rho_{\mathrm{m}})\\
  \leq& \rho(\tau - \gamma \xi(\tau),\xi(\tau)) \\&+ \gamma \norm{v'}_\infty\int_0^\infty (\rho_{\mathrm{m}}-\rho(\tau - \gamma \xi(\tau) -\gamma s, \xi(\tau) + s)) w(s)\,ds + \gamma \rho_{\mathrm{m}} v(\rho_{\mathrm{m}})\\
    \leq& \rho(\tau - \gamma \xi(\tau),\xi(\tau)) + \int_0^\infty (\rho_{\mathrm{m}}-\rho(\tau - \gamma \xi(\tau) -\gamma s, \xi(\tau) + s)) \tilde{w}(s)\,ds + \gamma \rho_{\mathrm{m}} v(\rho_{\mathrm{m}}) \\
  =& \rho(\tau - \gamma \xi(\tau),\xi(\tau)) - \int_0^\infty \rho(\tau - \gamma \xi(\tau) -\gamma s, \xi(\tau) + s) \tilde{w}(s)\,ds + g(\rho_{\mathrm{m}}).
\end{align*}
It yields that
{\small \begin{align*}
  \int_0^\infty \rho(\tau - \gamma \xi(\tau) -\gamma s, \xi(\tau) + s) \tilde{w}(s)\,ds - \rho(\tau - \gamma \xi(\tau),\xi(\tau)) \leq g(\rho_{\mathrm{m}}) - z(\tau - \gamma \xi(\tau),\xi(\tau)).
\end{align*}}
This inequality combined with  \eqref{eq:estim_Dq_2} gives
\[
\partial_y q(\tau - \gamma \xi(\tau),\xi(\tau))\leq w(0) (g(\rho_{\mathrm{m}}) - z(\tau - \gamma \xi(\tau),\xi(\tau))).\]
Furthermore, we have
\begin{align*}
  0\leq \left. z \frac{-v'(q)}{(1+\gamma v(q))^2} \right|_{(\tau - \gamma \xi(\tau),\xi(\tau))} \leq \norm{v'}_\infty,
\end{align*}
and hence
\begin{align*}
  \frac{d}{d\tau}z(\tau-\gamma\xi(\tau), \xi(\tau)) \leq C (g(\rho_{\mathrm{m}}) - z(\tau - \gamma \xi(\tau),\xi(\tau))),
\end{align*}
where $C=\norm{v'}_\infty w(0)$.
Integrating the above inequality with the initial condition $z(\tau_0-\gamma\xi(\tau_0), \xi(\tau_0)) = z(0,x_0)$, we obtain that
\begin{align*}
    z(\tau-\gamma\xi(\tau), \xi(\tau)) \leq& e^{C(\tau_0-\tau)} z(0,x_0) + (1-e^{C(\tau_0-\tau)}) g(\rho_{\mathrm{m}}) \\
    \leq& e^{C(\tau_0-\tau)} g(\rho_{\mathrm{max}}) + (1-e^{C(\tau_0-\tau)}) g(\rho_{\mathrm{m}}), \quad \tau\in[\tau_0,\tau_1].
\end{align*}
Noting that $\tau_1-\tau_0\leq \frac{T^*}{1-\gamma v_{\mathrm{max}}}$ and $g(\rho_{\mathrm{max}}) \leq g(\rho_{\mathrm{m}})$, we have
\begin{align}\label{eq:char_uniform_bound}
    z(\tau-\gamma\xi(\tau), \xi(\tau)) \leq C_1 g(\rho_{\mathrm{max}}) + (1-C_1) g(\rho_{\mathrm{m}}) , \quad \tau\in[\tau_0,\tau_1],
\end{align}
where $C_1 = \mathrm{exp} (- \frac{\Vert v' \Vert_{\infty} w(0) T^* }{1- \gamma v_{\max}})  \in (0,1)$.
Now we let $x_0$ run over $\mathbb{R}$; the respective characteristic curves fill the domain $[0,T^*]\times\mathbb{R}$ and so \eqref{eq:char_uniform_bound} is uniform to the choice of $x_0$, hence we have
\begin{align}\label{eq:AprioriUniformBoundz}
    z_{\mathrm{m}} \leq C_1 g(\rho_{\mathrm{max}}) + (1-C_1) g(\rho_{\mathrm{m}}).
\end{align}

Now suppose $\rho_{\mathrm{m}}>\rho_{\mathrm{max}}$. Then we have
\begin{align*}
    \sup_{(t,x)\in[0,T^*]\times\mathbb{R}} q(t,x) \leq \rho_{\mathrm{m}} = \sup_{(t,x)\in[0,T^*]\times\mathbb{R}} \rho(t,x),
\end{align*}
and since $v$ is decreasing we have for any $(t,x)\in[0,T^*]\times\mathbb{R}$
\begin{align*}
    \rho(t,x) = \frac{z(t,x)}{1+\gamma v(q(t,x))} \leq \frac{z_{\mathrm{m}}}{1+\gamma v(\rho_{\mathrm{m}})}.
\end{align*}
Therefore
$\rho_{\mathrm{m}} \leq \frac{z_{\mathrm{m}}}{1+\gamma v(\rho_{\mathrm{m}})}$,
and so by definition of $g$ and by \eqref{eq:AprioriUniformBoundz}
\begin{align*}
    g(\rho_{\mathrm{m}}) \leq z_{\mathrm{m}} \leq C_1 g(\rho_{\mathrm{max}}) + (1-C_1) g(\rho_{\mathrm{m}}) \Longrightarrow g(\rho_{\mathrm{m}}) \leq g(\rho_{\mathrm{max}}),
\end{align*}
which contradicts $\rho_{\mathrm{m}}>\rho_{\mathrm{max}}$. Therefore we deduce that $\rho_{\mathrm{m}} \leq \rho_{\mathrm{max}}$. Applying this in \eqref{eq:AprioriUniformBoundz} gives $z_{\mathrm{m}} \leq g(\rho_{\mathrm{max}})$, and so the upper bounds in \eqref{eq:sol_uniform_bound} are proved.

\vspace{4pt}\noindent
\textbf{Step 5 (Final Lipschitz estimates).}
In Step 2, we obtain bounds on $\partial_y q$ and $\partial_y z$. Using the equation \eqref{eq:model_z}, a bound on $\partial_t z$ can also be obtained and we conclude that $z$ is Lipschitz continuous on $[0,T]\times\mathbb{R}$.
To show the Lipschitz continuity of $\rho$, it suffices to show that of $q$ %as
since
$\rho=\frac{z}{1+\gamma v(q)}$. %Since
Given the established bound on $\partial_y q$,
%has been given,
we only need to show the existence of $\partial_x q$ and give a bound on it.

Let us denote
\begin{align*}
  K_0 \doteq \sup_{(t,x)\in(-\infty,0]\times\mathbb{R}} |\partial_x \rho_-(t,x)|, \quad K_1 \doteq \sup_{(t,x)\in[0,T]\times\mathbb{R}} |\partial_x z(t,x)|,
\end{align*}
and
\begin{align*}
  K(t,r) \doteq \sup_{|x_0-x_1|=r} \frac{|q(t,x_0)-q(t,x_1)|}{r} \quad \forall r>0, \, t\in[0,T].
\end{align*}
For any $t\in[0,T]$ and $x_0\neq x_1$, we have:
\begin{align*}
 & |q(t, x_0) - q(t, x_1)|\\ \leq& \int_0^\infty |\rho(t-\gamma s, x_0+s) - \rho(t-\gamma s, x_1+s)| w(s) \,ds \\
  \leq& \int_0^{t/\gamma} | \rho(t-\gamma s, x_0+s) - \rho(t-\gamma s, x_1+s) | w(s) \,ds + K_0|x_0-x_1|.
\end{align*}
The equation $\rho=\frac{z}{1+\gamma v(q)}$ gives
\begin{align*}
  & | \rho(t-\gamma s, x_0+s) - \rho(t-\gamma s, x_1+s) | \\
  \leq& | z(t-\gamma s, x_0+s) - z(t-\gamma s, x_1+s) | \\&+ \gamma \norm{v'}_\infty | q(t-\gamma s, x_0+s) - q(t-\gamma s, x_1+s) | \\
  \leq& K_1|x_0-x_1| + \gamma \norm{v'}_\infty K(t-\gamma s, |x_0-x_1|) |x_0-x_1|.
\end{align*}
Therefore we have
\begin{align*}
  K(t,r) &\leq K_0 + K_1 + \gamma \norm{v'}_\infty \int_0^{t/\gamma} K(t-\gamma s, r) w(s)\,ds \\
  &= K_0 + K_1 + \norm{v'}_\infty \int_0^t K(\tilde{t}, r) w((t-\tilde{t})/\gamma) \,d\tilde{t},
\end{align*}
for any $t\in[0,T]$ and $r>0$.

Using Gr\"onwall's inequality, we deduce that there exists a constant $K_2>0$ only depending on $K_0,K_1,\gamma,v,w$ such that $K(t,r) \leq K_2$ for any $t\in[0,T]$ and $r>0$, which gives the Lipschitz bound $|\partial_x q(t,x)|\leq K_2$ for $(t,x)\in[0,T]\times\mathbb{R}$.

\vspace{4pt}\noindent
\textbf{Step 6 (Continuation).}
We iteratively construct the solution on time intervals $[t_0,t_1]$, $[t_1,t_2]$, $[t_2,t_3]$, $\cdots$ from $t_0=0$.
At time $t_k$ ($k=0,1,2,\cdots$), the past-time data is given by $\rho(t,x)$ for $(t,x)\in[0,t_k]\times\mathbb{R}$ and $\rho_{-}(t,x)$ for $(t,x)\in(-\infty,0]\times\mathbb{R}$.
Thanks to the $\mathbf{L}^\infty$ and Lipschitz bounds obtained in Step 2, Step 4, and Step 5, the constructed solution on the time interval $[0,t_k]$ satisfies
\[ \rho_{\mathrm{min}} \leq \rho(t,x) \leq \rho_{\mathrm{max}}, \quad g(\rho_{\mathrm{min}}) \leq z(t,x) \leq g(\rho_{\mathrm{max}}), \]
and
\[ \rho \text{ is Lipschitz with } \norm{\rho}_{\mathrm{Lip}} \leq C(\rho_-,\gamma,v,w) Z(t_k), \]
where $Z(t)$ is the solution of the ODE \eqref{eq:lip_ode}.
The above estimates guarantee that $t_k\to\infty$ as $k\to\infty$ and the solution can be extended to the whole domain $(t,x)\in[0,\infty)\times\mathbb{R}$.

\subsection{Discussion}\label{sub:discussion1}

We now make some remarks on the model assumptions and the proof of Theorem~\ref{thm:wellposedness_lipschitz}.

\begin{remark}\label{rem:nonlocal_kernel}
The Assumption 2 requires that the weight kernel $w=w(s)$ has an exponential decay. Such an assumption was also used in \cite{colombo2021local} to establish the nonlocal-to-local limit of the nonlocal-in-space model \eqref{eq:model_nonlocal_space_1}-\eqref{eq:model_nonlocal_space_2}.
In Theorem~\ref{thm:wellposedness_lipschitz}, it is assumed that $\gamma \norm{v'}_\infty w(0) \leq \beta$, which together with \eqref{eq:kernel_decay} gives
\begin{align}\label{eq:kernel_decay_a}
  w'(s) \leq -\gamma\norm{v'}_\infty w(0)w(s).
\end{align}
It is worth noting that if $w=w(s)$ satisfies the condition \eqref{eq:kernel_decay_a}, the rescaled kernel $w_\ep(s)=w(s/\ep)/\ep$ also satisfies the condition with the same parameters $\gamma$ and $\norm{v'}_\infty$.
For the exponential kernel $w=w_\ep(s)$ defined in \eqref{eq:exp_kernel}, the Assumption 2 is satisfied for all $\ep>0$ whenever $\gamma {\norm{v'}_\infty} <1$, which is consistent with the sub-characteristic condition under the relaxation system formulation \eqref{model_relax_1}-\eqref{model_relax_2}.
\end{remark}

\begin{remark}\label{rem:initial_condition}
Let us define the function space \smallskip
\begin{equation}\label{eq:Lip_IC_Def_VertExt}
\begin{split}
&\widetilde{\mathcal{X}}_{L} \doteq \\&\left\{ \rho_0 \in \mathbf{L}^{\infty}(\mathbb{R}) \, : \,
\begin{gathered}
    \inf_{ x \in \mathbb{R}}\rho_{0}(x) > 0, \quad \sup_{x \in \mathbb{R}}\rho_{0}(x) < 1, \\
    \inf_{x\in\mathbb{R}}\rho_0(x) (1 + \gamma v(q_0(x))) > 0, \quad \sup_{x\in\mathbb{R}}\rho_0(x) (1 + \gamma v(q_0(x))) < 1, \\
    |\partial_x \rho_0(x)| \leq L, \quad \sup_{x\in\mathbb{R}} | \partial_x(\rho_0(x)(1+\gamma v(q_0(x)))) | \leq L
\end{gathered}
\right\},
\end{split}
\end{equation}
where the velocity $q_0$ is written as $q_0(x)=\int_0^\infty \rho_0(x+s) w(s) \,ds$.
Then for $\rho_-$ defined via the extension \eqref{eq:extension_vertical} for a given function $\rho_0$,
\begin{equation}
\rho_- \in \mathcal{X}_{\mathrm{Lip},L} \quad \Leftrightarrow \quad
\rho_0 \in \widetilde{\mathcal{X}}_{L}.
\end{equation}
By the form of $q$ we can see that even if $0\leq\rho_0(x)\leq1$ for all $x\in\mathbb{R}$ without an additional condition that the constraint $0\leq \rho_0(x)(1+\gamma v(q_0(x)))\leq1$ can be violated at some point $x\in\mathbb{R}$ where $\rho_0(x)=1$ and $q_0(x)<1$. A sufficient condition on $\rho_0$ alone to ensure $\rho_0 \in \widetilde{\mathcal{X}}_{L}$ is
\[ 0 < \inf_{x\in\mathbb{R}}\rho_0(x), \quad \rho_0(x) \leq \frac{1}{1+\gamma v_{\mathrm{max}}}, \quad |\partial_x \rho_0(x)| \leq \frac{L}{1+\gamma (v_{\max} + \norm{v'}_{\infty}) }. \]
In this case, the lower and upper bounds for the solutions given in Theorem~\ref{thm:wellposedness_lipschitz} become
\[ \inf_{x\in\mathbb{R}}\rho_0(x) \leq \rho(t,x) \leq (1+\gamma v_{\mathrm{max}}) \sup_{x\in\mathbb{R}}\rho_0(x), \quad (t,x)\in(0,\infty)\times\mathbb{R}. \]
These sufficient conditions and solution bounds may not be the best possible results, we will leave possible improvements for the future research.
\end{remark}

%%%%%%%%%%%%%%%%%%%%%%%%%%%%%%%%%%%%%%%
\section{Existence, uniqueness and stability of weak solutions}\label{sec:continuous_dependence}
%%%%%%%%%%%%%%%%%%%%%%%%%%%%%%%%%%%%%%%

For the remainder of the paper we concern ourselves with a class of past-time data extended vertically from given initial data.
That is, we assume that the past-time data $\rho_{-}\in\mathbf{L}^\infty ((-\infty,0]\times\mathbb{R})$ satisfies \eqref{eq:extension_vertical}
for a given $\rho_0 \in \mathcal{X}$, where $\mathcal{X}$ denotes the class
\begin{equation}\label{eq:Vert_IC_Def}
  \mathcal{X} \doteq \left\{ \rho_0\in\mathbf{L}^\infty(\mathbb{R}) \, : \,
  \begin{gathered}
    0 \leq \rho_0(x) \leq 1, \\
    0 \leq \rho_0(x) (1 + \gamma v(q_0(x))) \leq 1, \\
    \mathrm{TV}(\rho_0) < \infty
  \end{gathered}
  \right\}.
\end{equation}
With this assumption we establish the $\mathbf{L}^1$-stability of Lipschitz solutions, from which Theorem \ref{thm:WeakSolnExistence} %will
follows.

\begin{proposition}[$\mathbf{L}^1$-stability of Lipschitz solutions]\label{prop:L1Stab:1}
Under Assumption 1, Assumption 2, and \eqref{eq:GammaSmallAssump}, assume that two functions $\rho^i_0 \in \mathcal{X}_{L_i}$ for $i = 0,1$ (that is, their Lipschitz constants are possibly different).
  Let $\rho^i(t,x)$ for $i = 0,1$ be Lipschitz solutions  to \eqref{eq:model_nonlocal_1}-\eqref{eq:model_nonlocal_3} with initial conditions $\rho^i(0,x) = \rho_0^i(x)$ respectively.

  Then for any $T>0$ there exists a positive constant  $\bar{C} = \bar{C}(v,w,T,\mathrm{TV}(\rho_0^0),\break \mathrm{TV}(\rho_0^1))$ such that
  \begin{equation}\label{eq:L1LocEstimate}
    \begin{split}
      \int_0^T  \int_{\mathbb{R}} \Big( |\rho^1(t,x)-\rho^0(t,x)| +  |q^1(t,x)-q^0(t,x)| \Big) dx dt \leq \bar{C} \int_{\mathbb{R}} |\rho_0^1(x)-\rho^0_0(x)| dx\,.
    \end{split}
  \end{equation}
\end{proposition}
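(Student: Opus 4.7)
The plan is to derive a Gronwall-type integral inequality for $U(t) := \|\delta\rho(t,\cdot)\|_{\mathbf{L}^1} + \|\delta q(t,\cdot)\|_{\mathbf{L}^1}$, where $\delta\rho = \rho^1 - \rho^0$ and $\delta q = q^1 - q^0$, and close it via Gronwall. A direct Fubini computation from \eqref{eq:model_nonlocal_3} yields the elementary bound
\[
  \|\delta q(t,\cdot)\|_{\mathbf{L}^1} \leq \|\delta\rho_0\|_{\mathbf{L}^1} + \int_0^{t/\gamma} w(s)\,\|\delta\rho(t-\gamma s,\cdot)\|_{\mathbf{L}^1}\,ds,
\]
so the task reduces to controlling $\|\delta\rho(t,\cdot)\|_{\mathbf{L}^1}$ by $\|\delta\rho_0\|_{\mathbf{L}^1}$ plus a time integral of itself.

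Rather than estimating $\delta\rho$ directly, I would work with the auxiliary variable $z^i = \rho^i(1+\gamma v(q^i))$ introduced in Section \ref{sec:existence_lip_sol}. Writing $F(q) = v(q)/(1+\gamma v(q))$, the $z$-equation \eqref{eq:model_z} converts, using $\partial_y = \partial_x - \gamma\partial_t$ and the algebraic identity $F/(1-\gamma F) = v$, into a standard transport equation
\[
  \partial_t z + v(q)\,\partial_x z = -\frac{z\,F'(q)\,\partial_y q}{1-\gamma F(q)}.
\]
The right-hand side is pointwise-controlled via the key identity $\partial_y q(t,x) = w(0)\bigl(\tilde q(t,x) - \rho(t,x)\bigr)$, derived by integration by parts in $s$ under Assumption 2, where $\tilde q$ denotes the space-time nonlocal average of $\rho$ with weight $\tilde w := -w'/w(0)$. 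Crucially, no derivatives of $q$ or $\rho$ appear on the right.

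Subtracting the $z$-equations for $z^1,z^0$ and integrating against $\mathrm{sgn}(\delta z)$, the principal transport contributes only a Gronwall factor of size $\|v'(q^1)\partial_x q^1\|_\infty$, while the two residual contributions $(v(q^1)-v(q^0))\partial_x z^0$ and the source difference are controlled in $\mathbf{L}^1$ by $\|\delta q\|_{\mathbf{L}^1} + \|\delta z\|_{\mathbf{L}^1} + \|\delta\tilde q\|_{\mathbf{L}^1} + \|\delta\rho\|_{\mathbf{L}^1}$, times constants determined by the Lipschitz bounds on $q^i,z^0$ from Theorem \ref{thm:wellposedness_lipschitz} (which in turn depend on $L_i$ and $\mathrm{TV}(\rho_0^i)$). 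The pointwise inversion $|\delta\rho| \leq C(|\delta z|+|\delta q|)$ of $z=\rho(1+\gamma v(q))$, combined with an analogous Fubini bound for $\|\delta\tilde q\|_{\mathbf{L}^1}$, then yields
\[U(t) \leq C_1\|\delta\rho_0\|_{\mathbf{L}^1} + C_2 \int_0^t U(s)\,ds,\]
and Gronwall's inequality gives $U(t) \leq \bar C\|\delta\rho_0\|_{\mathbf{L}^1}$; a final integration in $t \in [0,T]$ produces \eqref{eq:L1LocEstimate}.

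The main obstacle is that a naive estimate for $\delta\rho$ directly, starting from $\partial_t \delta\rho + \partial_x\bigl[v(q^1)\delta\rho + \rho^0(v(q^1)-v(q^0))\bigr] = 0$, forces the term $\partial_x\delta q$ to appear in the source, whose $\mathbf{L}^1$-norm is controlled only by $\mathrm{TV}$-type quantities of $\delta\rho$ and hence does \emph{not} close the Gronwall loop linearly in $\|\delta\rho_0\|_{\mathbf{L}^1}$. The $z$-formulation together with the pointwise identity $\partial_y q = w(0)(\tilde q - \rho)$ --- both exploiting the kernel decay of Assumption 2 and the directional derivative intrinsic to the space-time nonlocal integral --- resolve this difficulty by replacing every derivative of $q$ in the source with a pointwise difference of $\rho$ and $\tilde q$, leaving only Lipschitz-type quantities to be absorbed into Gronwall.
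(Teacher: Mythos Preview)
Your argument has a genuine gap: the constant $\bar C$ you produce depends on the Lipschitz constants $L_0,L_1$ of the initial data, not merely on $\mathrm{TV}(\rho_0^i)$. You acknowledge this yourself --- the Gronwall factor is $\|v'(q^1)\partial_x q^1\|_\infty$, and the cross term $(v(q^1)-v(q^0))\partial_x z^0$ is controlled using the pointwise Lipschitz bound on $z^0$ from Theorem~\ref{thm:wellposedness_lipschitz}. But the proposition explicitly requires $\bar C=\bar C(v,w,T,\mathrm{TV}(\rho_0^0),\mathrm{TV}(\rho_0^1))$, and this is not cosmetic: the estimate is applied in the proof of Theorem~\ref{thm:WeakSolnExistence} to a mollified sequence with Lipschitz constants $L_n\to\infty$, so any dependence on $L_i$ makes the bound useless for constructing weak solutions.

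The paper avoids this by two devices your proposal lacks. First, it works entirely in the $\partial_y$-direction, keeping the $z$-equation in conservative form $\partial_t z+\partial_y[V(q)z]=0$ and running the $\mathbf{L}^1$-type estimate on tilted space-time slabs $\{(t-\gamma\xi,x+\xi):\xi\in(t/3\gamma,t/\gamma)\}$ rather than at fixed $t$; in that direction $\|\partial_y q\|_\infty\leq 3w(0)$ is a universal bound (exactly your identity $\partial_y q=w(0)(\tilde q-\rho)$), whereas $\|\partial_x q\|_\infty$ is not. Second --- this is the missing idea --- the cross term is handled not by a pointwise Lipschitz bound on $z$ but by a \emph{total-variation} bound along characteristic segments: the paper first proves (its Step~1) that $G(x,t):=\int_{t/3\gamma}^{t/\gamma}|\partial_y z^\theta(t-\gamma\xi,x+\xi)|\,d\xi\leq C e^{CT}\mathrm{TV}(\rho_0^\theta)$, uniformly in $x$ and in $\mathbf{L}^1_x$, and then (Step~2) pairs this with a $\sup_\xi$-bound on the linearized perturbation $Q^\theta$ that costs only $\mathbf{L}^1$ quantities of $P^\theta$. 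The homotopy in $\theta$ makes this pairing clean, since after linearization the dangerous product is $\partial_y z^\theta\cdot Q^\theta$ (base times perturbation). Your identity for $\partial_y q$ takes care of the source term, but it does not by itself dispose of the cross term; for that you need the TV-along-segments estimate in place of the Lipschitz bound.
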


\begin{proof}
  Let $\theta \in [0,1]$ be a parameter; for each value of $\theta$, define $\rho^{\theta}$ to be a Lipschitz solution to \eqref{eq:model_nonlocal_1}-\eqref{eq:model_nonlocal_3} satisfying $0 \leq \rho^{\theta} \leq 1$ with initial data $\rho^{\theta}(0,x) := \theta \rho^0(0,x)+(1-\theta) \rho^1(0,x)$. At least one such solution exists by Theorem \ref{thm:wellposedness_lipschitz} and Remark \ref{rem:initial_condition}.
  Define the first order perturbations for $(t,x) \in [0,\infty) \times \mathbb{R}$:
  \[
  P^\theta(t,x)  ~\doteq~ \lim_{h\to 0} \frac{\rho^{\theta+h}(t,x)-\rho^{\theta}(t,x)}{h},
  \qquad
  Q^\theta(t,x)  ~\doteq~ \lim_{h\to 0} \frac{q^{\theta+h}(t,x)-q^{\theta}(t,x)}{h}.
  \]
  Recalling the definition of the quantity $z^\theta = \rho^\theta(1+\gamma v(q^\theta))$ in \eqref{eq:def_z}, define its first order perturbation as
  \[
  \zeta^\theta(t,x)  ~\doteq~ \lim_{h\to 0} \frac{z^{\theta+h}(t,x)-z^{\theta}(t,x)}{h}.
  \]
  Then
  \begin{equation}\label{eq:FormulaForZeta}
    \zeta^\theta = P^\theta (1+\gamma v(q^\theta)) + \gamma \rho^\theta v'(q^\theta) Q^\theta,
  \end{equation}
  and $\zeta^{\theta}$ satisfies the linearized equation
  \begin{equation}\label{eq:LinearizedEqnForZeta}
    \partial_t \zeta^\theta + \partial_{y}[V(q^\theta) \zeta^\theta + z^\theta V'(q^\theta) Q^\theta] = 0,
  \end{equation}
  where $V(q) \doteq \frac{v(q)}{1+\gamma v(q)}$.

  From \eqref{eq:model_nonlocal_3} the integral defining $Q^{\theta}$ can be written as
  \begin{equation}\label{eq:Rep:GenKernel}
    \begin{split}
      Q^{\theta}(t,x) &= \int_{0}^{\infty} P^{\theta}(0, x+ t/\gamma + s) w(s+t/\gamma) ds  + \int_0^{t/\gamma} P^{\theta}(t-\gamma s,x+s) w(s) \; ds\,.
    \end{split}
  \end{equation}
  We %will
  also use a consequence of the condition \eqref{eq:kernel_decay} on $w$:
  \begin{equation}\label{eq:L1Stab:KernelDecay}
    w(s_1) \leq w(s_0) e^{-\beta(s_1-s_0)} \quad \text{ for } 0 \leq s_0 \leq s_1 < \infty.
  \end{equation}
    Third, we note a variant of Gr\"onwall's inequality
    \begin{equation}\label{eq:Gronwall}
        U'(t) \leq u(t) + C U(t)\,, U(0) = 0 \quad \Rightarrow \quad U(t) \leq \int_0^t e^{C(t-s)} u(s) ds\,.
    \end{equation}
  \textbf{Step 1.} We show that along any finite characteristic segment, the perturbed quantity $z^{\theta}$ has bounded total variation.
  To be precise, define
  \begin{equation*}
      G(x,t) := \int_{\frac{t}{3\gamma}}^{\frac{t}{\gamma}} \left| \partial_y z^{\theta} \left( t- \gamma \xi, x + \xi \right) \right| d \xi.
  \end{equation*}
  We claim that there exists $C$ depending only on $v$ and $w$ such that
  \begin{equation}\label{eq:L1Est:BVBound}
    \sup_{\substack{x \in \mathbb{R} \\ t \in [0,T] } } G(x,t) \leq M_T := \mathrm{TV}(\rho^{\theta}_0) \cdot C \mathrm{e}^{C T}
  \end{equation}
  and
  \begin{equation}\label{eq:L1Est:BVBound2}
    \sup_{\substack{ t \in [0,T] } } \int_{\mathbb{R}} G(x,t) \, dx \leq M_T.
  \end{equation}
    We will prove only \eqref{eq:L1Est:BVBound}; \eqref{eq:L1Est:BVBound2} is obtained using the same procedure. From
  \begin{equation*}
    \partial_y ( \partial_t z^{\theta}  ) = - \partial_y ( V(q^{\theta}) \partial_y z^{\theta} ) - \partial_y ( V'(q^{\theta}) z^{\theta }\partial_y q^{\theta}  )\,,
  \end{equation*}
  we have \smallskip
  \begin{equation*}
    \begin{split}
      \frac{d}{dt} [G(x,t)] &= \frac{d}{dt} \left[  \int_{\frac{t}{3\gamma}}^{\frac{t}{\gamma}} \left| \partial_y z^{\theta} \left( t- \gamma \xi, x + \xi \right) \right| d \xi \right] \\
      &= \frac{1}{\gamma}  |\partial_y z^{\theta}(0,x + t/\gamma)| - \frac{1}{3 \gamma}  |\partial_y z^{\theta}(2t/3,x + t/3\gamma)|
      \\
      &\quad + \int_{\frac{t}{3 \gamma}}^{\frac{t}{\gamma}}  \partial_{t} \big[ |\partial_y z^{\theta}(t-\gamma \xi,x+\xi)| \big] d \xi \\
      &=  \Big( \frac{1}{\gamma} - V(q^{\theta}) \Big)  |\partial_y z^{\theta}| (0,x+t/\gamma) + \Big( V(q^{\theta}) - \frac{1}{3\gamma} \Big)  |\partial_y z^{\theta}| (2t/3,x + t/3\gamma)  \\
      &\quad - \int_{\frac{t}{3\gamma}}^{\frac{t}{\gamma}}  \big( \text{sgn} (\partial_y z^{\theta})  \partial_{y} [z^{\theta} V'(q^{\theta}) \partial_y q^{\theta}] \big) (t-\gamma \xi, x+\xi) d \xi.
    \end{split}
  \end{equation*}
  Since $\gamma \| V \|_{\infty} < \frac{1}{3}$ the second term in the integral can be dropped in the estimate, and so
  \begin{equation}\label{eq:L1EstPf:BVBound:MainIntegral}
    \begin{split}
    \frac{d}{dt} [G(x,t)]
    &\leq \frac{C(v)}{\gamma} |\partial_y z^{\theta}(0,x+t/\gamma)| \\
    &\quad + \int_{\frac{t}{3\gamma}}^{\frac{t}{\gamma}}  \big| \partial_{y} [z^{\theta} V'(q^{\theta}) \partial_y q^{\theta}] \big| (t-\gamma \xi, x+\xi) d \xi dx.
    \end{split}
  \end{equation}
We need to estimate the last integral on the right-hand side.
We use \eqref{eq:model_nonlocal_3} to obtain the identities
\begin{equation*}
  \begin{split}
  \partial_y [q^{\theta}(t-\gamma \xi, x + \xi) ] &= \int_0^{\infty} \partial_x \rho^{\theta}(0,x+t/\gamma+s) w(s+t/\gamma - \xi) d s \\
    &\quad + \int_{\xi}^{t/\gamma} \partial_y \rho^{\theta}(t-\gamma s, x+s ) w(s-\xi) ds, \\
  \partial_{yy} [q^{\theta}(t-\gamma \xi, x + \xi)] &= -\int_0^{\infty} \partial_x \rho^{\theta}(0,x+t/\gamma+s) w'(s+t/\gamma - \xi) d s \\
    &\quad - w(0) \partial_y \rho^{\theta}(t-\gamma \xi, x+ \xi) \\-& \int_{\xi}^{t/\gamma} \partial_y \rho^{\theta}(t-\gamma s, x+s ) w'(s-\xi) ds,
    \end{split}
\end{equation*}
from which it follows that
\begin{equation}\label{eq:L1EstPf:BVBound:Q1}
  \begin{split}
    \int_{\frac{t}{3 \gamma}}^{\frac{t}{\gamma}} |\partial_y [q^{\theta}(t-\gamma \xi, x + \xi)]| d \xi
    &\leq  \mathrm{TV}(\rho^{\theta}_0) +
    \int_{\frac{t}{3 \gamma}}^{\frac{t}{\gamma}} |\partial_y \rho^{\theta}(t-\gamma \xi,x + \xi)| d \xi, \\
    \int_{\frac{t}{3 \gamma}}^{\frac{t}{\gamma}} |\partial_{yy} [ q^{\theta}(t-\gamma \xi,x+\xi) ] | d \xi
    &\leq  w(0) \mathrm{TV}(\rho^{\theta}_0) +  2 w(0)
    \int_{\frac{t}{3 \gamma}}^{\frac{t}{\gamma}} |\partial_y \rho^{\theta}(t-\gamma \xi,x + \xi)| d \xi.
  \end{split}
\end{equation}
In the same way, we can obtain the bound
\begin{equation}\label{eq:L1EstPf:Q2}
  \sup_{\substack{ \xi \in (\frac{t}{3 \gamma},\frac{t}{ \gamma}) }} \norm{ \partial_y [q^{\theta}(t-\gamma \xi, \cdot + \xi)] }_{\infty} = \sup_{\substack{ \xi \in (\frac{t}{3 \gamma},\frac{t}{ \gamma})  }} \norm{  \partial_\xi [q^{\theta}(t-\gamma \xi, \cdot+ \xi)]  }_{\infty} \leq 3 w(0).
\end{equation}
The estimates \eqref{eq:L1EstPf:BVBound:Q1} and \eqref{eq:L1EstPf:Q2} are applied to majorize the integral on the last line of \eqref{eq:L1EstPf:BVBound:MainIntegral} by \smallskip
\begin{equation}\label{eq:L1Est:BVBound:MajorBound}
  \begin{split}
    &\|V''\|_{\infty}   \|z^{\theta}\|_{\infty} \sup_{\substack{ \xi \in (\frac{t}{3 \gamma},\frac{t}{ \gamma}) }} \norm{  \partial_y q^{\theta}(t-\gamma \xi, \cdot+ \xi)  }_{\infty}
    \cdot \int_{\frac{t}{3 \gamma}}^{\frac{t}{\gamma}} |\partial_y q^{\theta}(t-\gamma \xi,x+\xi)| d \xi \\
    &\qquad + \|V'\|_{\infty} \sup_{\substack{ \xi \in (\frac{t}{3 \gamma},\frac{t}{ \gamma}) }} \norm{  \partial_y q^{\theta}(t-\gamma \xi, \cdot+ \xi)  }_{\infty} \cdot
    \int_{\frac{t}{3 \gamma}}^{\frac{t}{\gamma}} |\partial_y z^{\theta}(t-\gamma \xi,x+\xi)| d \xi  \\
    &\qquad +  \|V'\|_{\infty}
    \| z^{\theta} \|_{\infty} \int_{\frac{t}{3 \gamma}}^{\frac{t}{\gamma}} |\partial_{yy} q^{\theta}(t-\gamma \xi, x+\xi)| d \xi \\
    &\leq C(v,w) \left[ \mathrm{TV}(\rho^{\theta}_0) +
    \int_{\frac{t}{3 \gamma}}^{\frac{t}{\gamma}} |\partial_y \rho^{\theta}(t-\gamma \xi,x + \xi)|  d \xi +  \int_{\frac{t}{3 \gamma}}^{\frac{t}{\gamma}} |\partial_y  z^{\theta}(t-\gamma \xi,x + \xi)|  d \xi  \right].
  \end{split}
\end{equation}
Now, since $z = \rho(1+\gamma v(q))$ we have that $(1-\gamma v_{\mathrm{max}})|\partial_y \rho| \leq |\partial_y z| + \gamma \| v' \|_{\infty} |\partial_y q|$, and so along with \eqref{eq:L1EstPf:BVBound:Q1}
\begin{equation*}
  \begin{split}
    &\int_{\frac{t}{3 \gamma}}^{\frac{t}{\gamma}} |\partial_y \rho^{\theta}(t-\gamma \xi,x+\xi)| d \xi\\
    &\leq \int_{\frac{t}{3 \gamma}}^{\frac{t}{\gamma}} |\partial_y z^{\theta}(t-\gamma \xi,x+\xi)| d \xi  + \frac{\gamma \| v' \|_{\infty} }{ 1-\gamma v_{\mathrm{max}} } \int_{\frac{t}{3 \gamma}}^{\frac{t}{\gamma}} |\partial_y q^{\theta}(t-\gamma \xi,x+\xi)| d \xi  \\
    &\leq \mathrm{TV}(\rho^{\theta}_0) + \int_{\frac{t}{3 \gamma}}^{\frac{t}{\gamma}} |\partial_y z^{\theta}(t-\gamma \xi,x+\xi)| d \xi  + \frac{\gamma \| v' \|_{\infty} }{ 1-\gamma v_{\mathrm{max}} } \int_{\frac{t}{3 \gamma}}^{\frac{t}{\gamma}} |\partial_y \rho^{\theta}(t-\gamma \xi,x+\xi)| d \xi.
  \end{split}
\end{equation*}
Since $\frac{\gamma \| v' \|_{\infty} }{ 1-\gamma v_{\mathrm{max}} } <1/3$ by assumption we can absorb the last term into the left-hand side of the estimate to get
\begin{equation}\label{eq:L1EstPf:BVBound:AbsorbBd}
  \int_{\frac{t}{3\gamma}}^{\frac{t}{\gamma}} |\partial_y \rho^{\theta}(t-\gamma \xi,x+\xi)| d \xi  \leq \frac{3}{2} \left( \mathrm{TV}(\rho^{\theta}_0) + \int_{\frac{t}{3\gamma}}^{\frac{t}{\gamma}} |\partial_y z^{\theta}(t-\gamma \xi,x+\xi)| d \xi \right).
\end{equation}
Inserting \eqref{eq:L1EstPf:BVBound:AbsorbBd} into \eqref{eq:L1Est:BVBound:MajorBound}, the estimate for the total variation of $z^{\theta}$ from \eqref{eq:L1EstPf:BVBound:MainIntegral} is now
\begin{equation*}
  \frac{d}{dt} \left[  G(x,t) \right]  \leq \frac{C(v)}{\gamma} |\partial_y z^{\theta}(0,x+t/\gamma)| + C(v,w) \mathrm{TV}(\rho^{\theta}_0) +  C(v,w) G(x,t).
\end{equation*}
Then by \eqref{eq:Gronwall}
\begin{equation*}
    \begin{split}
        G(x,t) &\leq \bar{C} e^{\bar{C}t} \int_0^t \left( \mathrm{TV}(\rho^{\theta}_0) + \frac{1}{\gamma} |\partial_y z^{\theta}(0,x+s/\gamma)| \right) ds \\
        &\leq \bar{C} e^{\bar{C}t} \mathrm{TV}(\rho^{\theta}_0) + \bar{C} e^{\bar{C}t} \int_{x}^{x+t/\gamma} |\partial_y z^{\theta}(0,\xi)| d \xi \leq \bar{C} e^{\bar{C}t} \mathrm{TV}(\rho^{\theta}_0),
    \end{split}
\end{equation*}
where $\bar{C}$ depends only on $v$ and $w$.
The bound \eqref{eq:L1Est:BVBound} follows.

\vspace{4pt}\noindent
  \textbf{Step 2.} We prove the main result. The method is similar to Step 1.
  Define $E: [0,\infty) \to \mathbb{R}$ by
  \begin{equation*}
    \begin{split}
      E(t) := \int_{\frac{t}{3 \gamma}}^{\frac{t}{\gamma}} \int_{\mathbb{R}} |\zeta^{\theta}(t-\gamma \xi,x + \xi)| d x d \xi = \frac{1}{\gamma} \int_0^{\frac{2t}{3}} \int_{\mathbb{R}} |\zeta^{\theta}(\tau,x)| d x d \tau.
    \end{split}
  \end{equation*}
  We use the linearized equation \eqref{eq:LinearizedEqnForZeta} and apply integration by parts to obtain \smallskip
  \begin{equation*}
    \begin{split}
      \frac{d}{dt} E(t)
      &= \frac{1}{\gamma} \int_{\mathbb{R}} |\zeta^{\theta}(0,x + t/\gamma)| dx - \frac{1}{3 \gamma} \int_{\mathbb{R}} |\zeta^{\theta}(2t/3,x + t/3\gamma)| dx
      \\
      &\quad + \int_{\frac{t}{3 \gamma}}^{\frac{t}{\gamma}} \int_{\mathbb{R}}  \partial_{t} \big[ |\zeta^{\theta}(t-\gamma \xi,x+\xi)| \big] dx d \xi \\
      &= \int_{\mathbb{R}} \Bigg[ \Big( \frac{1}{\gamma} - V(q^{\theta}) \Big)  |\zeta^{\theta}| (0,x+t/\gamma) + \Big( V(q^{\theta}) - \frac{1}{3\gamma} \Big)  |\zeta^{\theta}| (2t/3,x + t/3\gamma)  \\
        &\quad - \int_{\frac{t}{3\gamma}}^{\frac{t}{\gamma}}  \big( \text{sgn} (\zeta^{\theta})  \partial_{y} [z^{\theta} V'(q^{\theta}) Q^{\theta}] \big) (t-\gamma \xi, x+\xi) d \xi \Bigg] dx.
    \end{split}
  \end{equation*}
  Since $\gamma \| V \|_{\infty} < \frac{1}{3}$, the second term in the integral can be dropped, and so
  \begin{equation}\label{eq:L1EstPf:SecondIntegral}
    \frac{d}{dt} E(t) \leq C(v) \left( \frac{1}{\gamma} \| \zeta^{\theta}(0,\cdot) \|_{1}
    +\int_{\mathbb{R}} \int_{\frac{t}{3\gamma}}^{\frac{t}{\gamma}}  \big| \partial_{y} [z^{\theta} V'(q^{\theta}) Q^{\theta}] \big| (t-\gamma \xi, x+\xi) d \xi dx \right).
  \end{equation}
  We need to estimate the last integral on the right-hand side.
  We use \eqref{eq:Rep:GenKernel} and \eqref{eq:L1Stab:KernelDecay} to obtain the estimates
  \begin{equation}\label{eq:L1EstPf:Q1}
    \begin{split}
      &\int_{\frac{t}{3 \gamma}}^{\frac{t}{\gamma}} \int_{\mathbb{R}} |Q^{\theta}(t-\gamma \xi, x + \xi)| d \xi
      \leq  \beta^{-1} \| P^{\theta}(0,\cdot) \|_{1} +
      \int_{\frac{t}{3\gamma}}^{\frac{t}{\gamma}}  \int_{\mathbb{R}} |P^{\theta}(t-\gamma s,x + s)| dx ds, \\
      &\sup_{\xi \in (\frac{t}{3 \gamma},\frac{t}{\gamma})} |Q^{\theta}(t-\gamma \xi,x+\xi)| \leq \| P^{\theta}(0,\cdot) \|_{1} + w(0)
      \int_{\frac{t}{3 \gamma}}^{\frac{t}{\gamma}}  |P^{\theta}(t-\gamma s,x+s)| d s, \\
      &\quad\int_{\frac{t}{3\gamma}}^{\frac{t}{\gamma}} \int_{\mathbb{R}} |\partial_{y} [ Q^{\theta}(t-\gamma \xi,x+\xi) ] | d x d \xi
      \\&= \int_{\frac{t}{3\gamma}}^{\frac{t}{\gamma}} \int_{\mathbb{R}} | \partial_{\xi} [ Q^{\theta}(t-\gamma \xi,x+\xi) ] | d x d \xi \\
      &=  \int_{\frac{t}{3\gamma}}^{\frac{t}{\gamma}} \int_{\mathbb{R}} \bigg| \int_0^{\infty} P^{\theta}(0,x+t/\gamma+s) (-w'(s-\xi+t/\gamma)) ds \\
      &\qquad  + P^{\theta}(t-\gamma \xi, x+\xi) w(0) - \int_{\xi}^{\frac{t}{\gamma}} P^{\theta}(t-\gamma s,x+s) w'(s-\xi) ds  \bigg| d x d \xi \\
      &\leq  \| P^{\theta}(0,\cdot) \|_{1} + 2 w(0)
      \int_{\frac{t}{3\gamma}}^{\frac{t}{\gamma}}  \int_{\mathbb{R}} |P^{\theta}(t-\gamma s,x + s)| dx ds.
    \end{split}
  \end{equation}
  Then \eqref{eq:L1EstPf:Q1}, \eqref{eq:L1EstPf:Q2}, \eqref{eq:L1Est:BVBound} and \eqref{eq:L1Est:BVBound2} are applied to majorize the last integral in \eqref{eq:L1EstPf:SecondIntegral} by
  \begin{equation}\label{eq:L1Est:MajorBound}
    \begin{split}
      &\|V''\|_{\infty}   \|z^{\theta}\|_{\infty} \sup_{\substack{ \xi \in (t/3\gamma,t/\gamma) }} \norm{  \partial_y q^{\theta}(t-\gamma \xi, \cdot+ \xi)  }_{\infty}
      \cdot \int_{\frac{t}{3\gamma}}^{\frac{t}{\gamma}} \int_{\mathbb{R}}  |Q^{\theta}(t-\gamma \xi,x+\xi)| d x d \xi \\
      &\qquad + \|V'\|_{\infty} \|z^{\theta}\|_{\infty}
      \int_{\frac{t}{3\gamma}}^{\frac{t}{\gamma}} \int_{\mathbb{R}}  |\partial_y [Q^{\theta}(t-\gamma \xi,x+\xi)]| d x d \xi  \\
      &\qquad \quad +  \|V'\|_{\infty} \int_{\mathbb{R}} \left(
      \sup_{\xi \in (\frac{t}{3 \gamma},\frac{t}{\gamma})} |Q^{\theta}(t-\gamma \xi,x+\xi) | \right) \int_{\frac{t}{3\gamma}}^{\frac{t}{\gamma}} |\partial_y z^{\theta}(t-\gamma \xi, x+\xi)| d \xi d x \\
       \end{split}
  \end{equation}
      \begin{equation*}%\label{eq:L1Est:MajorBound}
    \begin{split}
      &\leq C(v,w,M_T) \left[ \| P^{\theta}(0,\cdot) \|_{1} +
      \int_{\frac{t}{3\gamma}}^{\frac{t}{\gamma}}  \int_{\mathbb{R}} |P^{\theta}(t-\gamma s,x + s)| dx ds\right].
    \end{split}
  \end{equation*}
  Now, as a consequence of \eqref{eq:FormulaForZeta} we have
  \begin{equation}\label{eq:rhoAndqAndz}
      (1 - \gamma v_{\mathrm{max}}) |P^{\theta}| \leq |\zeta^{\theta}| + \gamma \| v' \|_{\infty} |Q^{\theta}|,
    \end{equation}
  so with \eqref{eq:L1EstPf:Q1} and the conditions \eqref{eq:GammaSmallAssump} on $\gamma$ and $v$
  \begin{equation*}
    \begin{split}
       & \int_{\frac{t}{3 \gamma}}^{\frac{t}{\gamma}} \int_{\mathbb{R}}  |P^{\theta}(t-\gamma \xi, x + \xi)| dx  d \xi\\
      &\leq \frac{1}{1-\gamma v_{\mathrm{max}}} \int_{\frac{t}{3 \gamma}}^{\frac{t}{\gamma}} \int_{\mathbb{R}} |\zeta^{\theta}(t-\gamma \xi, x + \xi)| dx  d \xi \\
          &\qquad +  \frac{\gamma \| v' \|_{\infty}}{1-\gamma v_{\mathrm{max}}}  \int_{\frac{t}{3 \gamma}}^{\frac{t}{\gamma}} \int_{\mathbb{R}}  |Q^{\theta}(t-\gamma \xi, x + \xi)| dx  d \xi \\
      &\leq 2 E(t) + \frac{1}{3 \beta} \| P^{\theta}(0,\cdot) \|_{1} + \frac{1}{3}
      \int_{\frac{t}{3\gamma}}^{\frac{t}{\gamma}}  \int_{\mathbb{R}} |P^{\theta}(t-\gamma s,x + s)| dx ds.
    \end{split}
  \end{equation*}
  Therefore we can absorb the last term into the left-hand side of the estimate to get
  \begin{equation}\label{eq:L1EstPf:AbsorbBd}
    \int_{\frac{t}{3 \gamma}}^{\frac{t}{\gamma}} \int_{\mathbb{R}}  |P^{\theta}(t-\gamma \xi, x + \xi)| dx  d \xi \leq C(\beta) \left( E(t) + \| P^{\theta}(0,\cdot) \|_{1} \right).
  \end{equation}
  Inserting \eqref{eq:L1EstPf:AbsorbBd} into \eqref{eq:L1Est:MajorBound}, the estimate for the derivative of $E(t)$ from \eqref{eq:L1EstPf:SecondIntegral} is now
  \begin{equation*}
    \frac{d}{d t} E(t)  \leq  C(v,w,T) \left(  \gamma^{-1} \| P^{\theta}(0,\cdot) \|_{1} + E(t) \right);
  \end{equation*}
  the bound $\| \zeta^{\theta}(0,\cdot) \|_{1} \leq C(\gamma,v) \| P^{\theta}(0,\cdot) \|_{1}$ is easily seen from \eqref{eq:FormulaForZeta} and \eqref{eq:Rep:GenKernel}.
  Applying Gr\"onwall's inequality and changing coordinates, we obtain
  \begin{equation}\label{eq:L1Est:FinalZetaEst}
    \begin{split}
      \int_0^T \int_{\mathbb{R}} |\zeta^{\theta}(t,x)| dx dt \leq \bar{C}(v,w,T)  \| P^{\theta}(0,\cdot) \|_{1}.
    \end{split}
  \end{equation}

  Now, by \eqref{eq:L1EstPf:Q1}
  \begin{equation*}
    \begin{split}
     & \int_0^T \int_{\mathbb{R}} |Q^{\theta}(t,x)| dx d t \\&\leq \frac{\gamma}{\beta}  \| P^{\theta}(0,\cdot) \|_{1} + \int_0^T \int_{\mathbb{R}} |P^{\theta}(t,x)| dx d t \\
      &\leq \frac{\gamma}{\beta}  \| P^{\theta}(0,\cdot) \|_{1} + C \int_0^T \int_{\mathbb{R}} |\zeta^{\theta}(t,x)| dx d t + \frac{\gamma \| v' \|_{\infty} }{1- \gamma v_{\mathrm{max}}} \int_0^T \int_{\mathbb{R}} |Q^{\theta}(t,x)| dx d t,
    \end{split}
  \end{equation*}
  where we used that $P^{\theta}$ satisfies \eqref{eq:rhoAndqAndz}.
Since $\frac{\gamma \| v' \|_{\infty} }{1- \gamma v_{\mathrm{max}}} < \frac{1}{3}$ we can absorb the $Q^{\theta}$ term and then apply \eqref{eq:L1Est:FinalZetaEst} to get
\begin{equation*}
  \int_0^T \int_{\mathbb{R}} |Q^{\theta}(t,x)| dx dt \leq \bar{C}(v,w,T)  \| P^{\theta}(0,\cdot) \|_{1}.
\end{equation*}
Therefore the estimates for $\zeta^{\theta}$ and $Q^{\theta}$ combine using \eqref{eq:rhoAndqAndz} to give us the estimate for $P^{\theta}$:
\begin{equation*}
  \int_0^T \int_{\mathbb{R}} |P^{\theta}(t,x)| dx dt \leq \bar{C}(v,w,T)  \| P^{\theta}(0,\cdot) \|_{1}.
\end{equation*}
  To conclude the proof, we use the
  %two previous displays:
  above two inequalities to get:
  \begin{equation*}
  \begin{split}
    \int_0^T \int_{\mathbb{R}} &\big( |\rho^1(t,x)-\rho^0(t,x)| + |q^1(t,x)-q^0(t,x)| \big) dx dt \\
    &\leq \int_0^1 \int_0^T \int_{\mathbb{R}}  \Big( |P^{\theta}(t,x)| + |Q^{\theta}(t,x)| \Big) dx dt  d\theta \\
    &\leq \int_0^1  \bar{C}(T)  \| P^{\theta}(0,\cdot) \|_{1} d \theta
    \leq  \bar{C}(T) \int_{\mathbb{R}} |\rho^1(0,x)-\rho^0(0,x)| dx.
  \end{split}
  \end{equation*}
\vspace{-10pt}\end{proof}

\begin{proof}[Proof of Theorem \ref{thm:WeakSolnExistence}]
  Let $\rho_0 \in \mathcal{X}$, and let $\rho_0^n$, $n \in \mathbb{N}$, be a sequence of mollified functions in $\widetilde{\mathcal{X}}_{L_n}$
  (possibly with $L_n \to \infty$) that converge to $\rho_0$ in $\mathbf{L}^1_{\mathrm{loc}}(\mathbb{R})$. By virtue of \eqref{eq:L1LocEstimate} the corresponding solutions $\rho^n \in \mathcal{D}_{L_n,T,\rho_{\mathrm{min}},\rho_{\mathrm{max}}}$ to \eqref{eq:model_nonlocal_1}-\eqref{eq:model_nonlocal_3} with initial condition $\rho^n(0,x) = \rho_0^n(x)$ are Cauchy, and hence converge, in $\mathbf{L}^1_{\mathrm{loc}}([0,T] \times \mathbb{R})$ to a function $\rho$. Thus $\rho$ satisfies \eqref{eq:WeakSolnDef}, and so is a weak solution.
  Furthermore, we note that the weak solutions constructed in this way inherit the same stability property \eqref{eq:L1LocEstimate}, since the bounding constant in that inequality does not depend on the Lipschitz constant of the solutions, and so uniqueness follows.
  To complete the proof, %since
  given that
  $\rho^n$ is a bounded sequence in $\mathbf{L}^{\infty}([0,T] \times \mathbb{R})$, %since
  and  the
  weak-$*$ limits are unique, by noting the sequence $\rho^n$ is obtained with initial conditions that are mollified approximations of $\rho_0$, we can pass through the limits to obtain the bounds \eqref{eq:sol_lower_bound}-\eqref{eq:sol_upper_bound} for the weak solution $\rho$.
\end{proof}

%%%%%%%%%%%%%%%%%%%%%%%%%%%%%%%%%%%%%%%
\section{Uniform BV bound and existence of limit solutions}
\label{sec:BV_bound}
%%%%%%%%%%%%%%%%%%%%%%%%%%%%%%%%%%%%%%%

Towards the aim of proving the convergence of the solutions
of \eqref{eq:model_nonlocal_1}-\eqref{eq:model_nonlocal_3}
as the weight kernel $w$ converges to a Dirac delta function, we consider only the exponential kernels as defined in \eqref{eq:exp_kernel}:
\begin{align*}
  w(s) = e^{-s}, \qquad
  w_\ep(s) = \ep^{-1} w(s/\ep) = \ep^{-1} e^{-s/\ep},\quad s\in[0,\infty).
\end{align*}
In this case the nonlocal model \eqref{eq:model_nonlocal_1}-\eqref{eq:model_nonlocal_3}
can be reformulated as the relaxation system \eqref{model_relax_1}-\eqref{model_relax_2}, which is recalled here: \begin{align*}
  \partial_t \rho + \partial_x (\rho v(q)) &=0, \\
  \partial_t q - \gamma^{-1}\partial_x q &= (\gamma\ep)^{-1} (\rho - q).
\end{align*}
The characteristic speeds of the system are
\[
\lambda_1 = -\gamma^{-1} < 0, \quad \lambda_2 = v(q) \geq 0.
\]
Taking $\ep\to0$, we expect the solution of \eqref{model_relax_1}-\eqref{model_relax_2} to converge to that of its equilibrium approximation,  which is the LWR model \eqref{eq:local_model}.
The characteristic speed of the limit equation \eqref{eq:local_model} is
\[
\lambda = v(\rho) + \rho v'(\rho).
\]

The condition \eqref{eq:GammaSmallAssump} plus $\rho\geq\rho_{\mathrm{min}}>0$ ensures the strict sub-characteristic condition $\lambda_1<\lambda<\lambda_2$.

\subsection{Uniform BV bound}\label{sub:uniformBVbound}

\begin{proof}[Proof of Theorem \ref{thm:BV_bound}]
Let us first assume $\rho_0\in\mathbf{C}^2_{\mathrm{c}}(\mathbb{R})$.
In this case, $\rho$ and $q$ are Lipschitz continuous and satisfy the reformulated system \eqref{model_relax_1}-\eqref{model_relax_2} pointwise.

Noting that $\rho$ and $1+\gamma v(q)$ stay positive provided $\rho_{\mathrm{min}}>0$, we construct
\begin{align}
  u  = \ln (\rho (1+\gamma v(q)) ), \quad h = -\ln (1+\gamma v(q)).
\end{align}
One can easily verify that $u$ and $h$ are Riemann invariants of the system \eqref{model_relax_1}-\eqref{model_relax_2} corresponding to the system's characteristic speeds $\lambda_2=v(q)$ and $\lambda_1=-\gamma^{-1}$, respectively.
With the new set of variables $(u,h)$, the system \eqref{model_relax_1}-\eqref{model_relax_2} can be diagonalized as
\begin{align}
\partial_tu + v(q(h)) \partial_xu =& \ep^{-1}\Lambda(u,h), \label{eq:system_quasi_monotone_1}\\
\partial_th - \gamma^{-1} \partial_xh =& -\ep^{-1}\Lambda(u,h), \label{eq:system_quasi_monotone_2}
\end{align}
where $q(h) \doteq v^{-1} \left( \gamma^{-1}(e^{-h}-1) \right)$ is an increasing function, and
\begin{align}
  \Lambda(u,h) = v'(q(h)) e^{h} \left( e^{u+h} - q(h) \right).
\end{align}

Note that $u(0,\cdot),h(0,\cdot)\in\mathbf{C}^2_{\mathrm{c}}(\mathbb{R})$ and $u,h$ are Lipschitz continuous. By the method of characteristics we see that $\partial_xu,\partial_xh$ are Lipschitz continuous and compactly supported.
We claim that the system
\eqref{eq:system_quasi_monotone_1}-\eqref{eq:system_quasi_monotone_2}
is total variation diminishing, i.e.,
\begin{equation}\label{eq:TVuh}
\frac{d}{dt} \int_{\mathbb{R}}| \partial_xu| + |\partial_xh| \,dx \leq 0.
\end{equation}
Indeed, differentiating
\eqref{eq:system_quasi_monotone_1}-\eqref{eq:system_quasi_monotone_2} with respect to $x$ gives
\begin{align*}
  \partial_t(\partial_xu) + \partial_x\left( v(q(h)) \partial_xu \right) &= \ep^{-1} (\partial_u\Lambda \cdot \partial_xu + \partial_h\Lambda \cdot \partial_xh), \\
  \partial_t(\partial_xh) + \partial_x\left( -\gamma^{-1} \partial_xh \right) &= -\ep^{-1} (\partial_u\Lambda \cdot \partial_xu + \partial_h\Lambda \cdot \partial_xh),
\end{align*}
from which we obtain that
\begin{align*}
  \frac{d}{dt} \int_{\mathbb{R}}| \partial_xu| + |\partial_xh| \,dx = \int_{\mathbb{R}} \mathrm{sgn}(\partial_xu) \cdot \partial_t(\partial_xu) + \mathrm{sgn}(\partial_xh) \cdot \partial_t(\partial_xh) \,dx = J_1 + J_2,
\end{align*}
where
\begin{align*}
  J_1 &= \int_{\mathbb{R}} -\mathrm{sgn}(\partial_xu) \cdot \partial_x\left( v(q(h)) \partial_xu \right) + \gamma^{-1} \mathrm{sgn}(\partial_xh) \cdot \partial_x(\partial_xh) \,dx \\
  &= \int_{\mathbb{R}} \delta(\partial_xu)v(q(h)) \partial_xu\partial_x^2u - \gamma^{-1} \delta(\partial_xh)\partial_xh\partial_x^2h \,dx \\
  &= 0,
\end{align*}
and
\begin{align*}
  J_2 &= \ep^{-1} \int_{\mathbb{R}} \mathrm{sgn}(\partial_xu) ( \partial_u\Lambda \cdot \partial_xu + \partial_h\Lambda \cdot \partial_xh ) - \mathrm{sgn}(\partial_xh) (\partial_u\Lambda \cdot \partial_xu + \partial_h\Lambda \cdot \partial_xh ) \,dx \\
  &\leq \ep^{-1} \int_{\mathbb{R}} (|\Lambda_u| + \Lambda_u) |\partial_xu| + (|\Lambda_h| - \Lambda_h) |\partial_xh| \,dx.
\end{align*}
A direct calculation gives
\[
\partial_u\Lambda = v'(q(h)) e^{u+2h} \leq 0 \]
and
{\small \begin{align*}
  &\partial_h\Lambda\\ &= e^{h} \left[ \frac{v''(q(h))(1+\gamma v(q(h)))}{\gamma v'(q(h))} (q(h)-e^{u+h}) + v'(q(h)) (2e^{u+h} - q(h)) + v(q(h)) + \frac1\gamma \right] \\
  &\geq e^{h} \left[ \frac{1}{\gamma} - 2\norm{v'}_{\infty} - \frac{(1+\gamma v_{\mathrm{max}}) \norm{v''}_{\infty}}{\gamma \min_{\rho\in[0,1]}|v'(\rho)|} \right] \\
  &\geq0,
\end{align*}}
where the condition \eqref{eq:BV_condition_gamma} and the solution bounds $0 < e^{u+h} = \rho \leq 1 ,~ 0 \leq q(h) \leq 1$ are used.
With $\partial_u\Lambda\leq0$ and $\partial_h\Lambda\geq0$,
the estimate \eqref{eq:TVuh} follows immediately.

Thanks to the estimate \eqref{eq:TVuh}, we now turn to the uniform BV bound on $\rho$. At the initial time $t=0$, we have
\begin{align*}
  \int_{\mathbb{R}} |\partial_x\rho(0,x)| \,dx = \int_{\mathbb{R}} |\partial_xq(0,x)| \,dx = \mathrm{TV}(\rho_0).
\end{align*}
Therefore,
\begin{align*}
  \int_{\mathbb{R}} |\partial_x u(0,x)| + |\partial_x h(0,x)| \,dx &\leq \int_{\mathbb{R}} \frac{1}{\rho(0,x)}|\partial_x \rho(0,x)| + \frac{2\gamma |v'(q(0,x))|}{1+\gamma v(q(0,x))} |\partial_x q(0,x)| \,dx \\
  &\leq \rho_{\mathrm{min}}^{-1} \int_{\mathbb{R}} |\partial_x \rho(0,x)| \,dx + 2\gamma\norm{v'}_{\infty} \int_{\mathbb{R}} |\partial_x q(0,x)| \,dx \\
  &\leq \left( \rho_{\mathrm{min}}^{-1} + 2\gamma\norm{v'}_{\infty} \right) \mathrm{TV}(\rho_0).
\end{align*}
Since the total variation of $(u,h)$ is diminishing, it holds that
\begin{align*}
  \int_{\mathbb{R}} |\partial_x u(t,x)| + |\partial_x h(t,x)| \,dx \leq \left( \rho_{\mathrm{min}}^{-1} + 2\gamma\norm{v'}_{\infty} \right) \mathrm{TV}(\rho_0),
\end{align*}
for any time $t\geq0$.
Noting that $\partial_x\rho = \rho (\partial_xu + \partial_xh)$, we deduce that
\begin{align*}
  \int_{\mathbb{R}} |\partial_x \rho(t,x)| \,dx \leq \int_{\mathbb{R}} |\partial_x u(t,x)| + |\partial_x h(t,x)| \,dx \leq \left( \rho_{\mathrm{min}}^{-1} + 2\gamma\norm{v'}_{\infty} \right) \mathrm{TV}(\rho_0).
\end{align*}
Then, using \eqref{eq:model_nonlocal_3} and \eqref{eq:model_nonlocal_1}, we have
\begin{align*}
  \int_{\mathbb{R}} |\partial_x q(t,x)| \,dx \leq& \left( \rho_{\mathrm{min}}^{-1} + 2\gamma\norm{v'}_{\infty} \right) \mathrm{TV}(\rho_0), \\
  \int_{\mathbb{R}} |\partial_t \rho(t,x)| \,dx \leq& \left( v_{\mathrm{max}} + \norm{v'}_\infty \right) \left( \rho_{\mathrm{min}}^{-1} + 2\gamma\norm{v'}_{\infty} \right) \mathrm{TV}(\rho_0),
\end{align*}
for any time $t\geq0$. Combining the above inequalities, we obtain
\begin{align*}
&  \int_0^T \int_{\mathbb{R}} |\partial_t\rho(t,x)|+|\partial_x\rho(t,x)| \,dxdt\\& \leq (v_{\mathrm{max}} + \norm{v'}_\infty + 1) \left( \rho_{\mathrm{min}}^{-1} + 2\gamma\norm{v'}_{\infty} \right) T \cdot \mathrm{TV}(\rho_0),
\end{align*}
which gives the desired uniform BV bound \eqref{eq:uniform_BV_bound}.

For general initial data $\rho_0 \in \mathcal{X}$, we apply an approximation argument as in Theorem~\ref{thm:WeakSolnExistence} but instead using $\mathbf{C}^2_{\mathrm{c}}(\mathbb{R})$ functions. By passing through the limit we deduce that the BV bound \eqref{eq:uniform_BV_bound} holds also for weak solutions of \eqref{eq:model_nonlocal_1}-\eqref{eq:model_nonlocal_3}.
\end{proof}

\begin{remark}
  A counterexample was given in \cite{colombo2021local} to show that the total variation of solutions to the nonlocal-in-space model \eqref{eq:model_nonlocal_space_1}-\eqref{eq:model_nonlocal_space_2} blow up as $\ep\to0$ if the initial data are not uniformly positive. We leave the same question for \eqref{eq:model_nonlocal_1}-\eqref{eq:model_nonlocal_3} to future works.
\end{remark}

\subsection{Convergence to a weak solution}\label{sub:weak}
Now we are in a position to show the existence of limit solutions that satisfy the limit equation \eqref{eq:local_model} in the weak sense. To pass the limit we need
to establish the following theorem.

\begin{theorem}\label{thm:limit_sol}
Under the same assumptions as in Theorem~\ref{thm:BV_bound}, let $\rho^\ep$ be the unique weak solution of \eqref{eq:model_nonlocal_1}-\eqref{eq:model_nonlocal_3} with parameter $\varepsilon$ and initial condition $\rho^{\varepsilon}(0,x) = \rho_0(x)$. There is a sequence $\ep_n\to0$ and a limit function $\rho^\star\in\mathbf{L}^\infty([0,\infty)\times\mathbb{R})$ such that $\rho^{\ep_n}\to\rho^\star$ in $\mathbf{L}^1_{\mathrm{loc}}([0,\infty)\times\mathbb{R})$. Moreover, $\rho^\star$ is a weak solution of \eqref{eq:local_model}.
\end{theorem}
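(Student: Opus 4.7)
The plan is to obtain $\rho^\star$ via a compactness argument based on the uniform BV bound from Theorem~\ref{thm:BV_bound}, and then pass to the limit in the weak formulation~\eqref{eq:WeakSolnDef}. The central observation is that the nonlocal quantity $q^\ep$ should converge to $\rho^\star$ along with $\rho^\ep$, since the relaxation equation~\eqref{model_relax_2} forces $\rho^\ep - q^\ep \to 0$ as $\ep \to 0$.

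First, by Theorem~\ref{thm:BV_bound} the family $\{\rho^\ep\}_{\ep>0}$ is uniformly bounded in $\mathbf{L}^\infty([0,\infty)\times\mathbb{R})$ and uniformly bounded in $\mathrm{BV}([0,T]\times\mathbb{R})$ for every $T>0$, with bounds independent of $\ep$. Helly's selection theorem (a standard BV compactness argument) then yields a subsequence $\ep_n \to 0$ and a limit $\rho^\star \in \mathbf{L}^\infty\cap\mathrm{BV}_{\mathrm{loc}}$ such that $\rho^{\ep_n} \to \rho^\star$ in $\mathbf{L}^1_{\mathrm{loc}}([0,\infty)\times\mathbb{R})$ (and pointwise a.e.\ along a further subsequence).

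Second, I would show that $q^{\ep_n} \to \rho^\star$ in the same topology. From the relaxation representation~\eqref{model_relax_2} we have the pointwise identity
\begin{equation*}
  \rho^\ep - q^\ep = \gamma \ep \bigl( \partial_t q^\ep - \gamma^{-1} \partial_x q^\ep \bigr).
\end{equation*}
The uniform BV bound on $\rho^\ep$ immediately transfers to $q^\ep$ via the convolution representation~\eqref{eq:model_nonlocal_3}, so $\partial_t q^\ep$ and $\partial_x q^\ep$ are uniformly bounded (independently of $\ep$) as measures on $[0,T]\times\mathbb{R}$. Consequently $\|\rho^\ep - q^\ep\|_{\mathbf{L}^1_{\mathrm{loc}}} = O(\ep)$, and combined with the first step this gives $q^{\ep_n} \to \rho^\star$ in $\mathbf{L}^1_{\mathrm{loc}}$.

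Third, I pass to the limit in the weak formulation. For each $\varphi \in \mathbf{C}^1_{\mathrm{c}}([0,\infty)\times\mathbb{R})$, Theorem~\ref{thm:WeakSolnExistence} gives
\begin{equation*}
  \int_0^\infty\!\!\int_{\mathbb{R}} \rho^{\ep_n} \partial_t\varphi + \rho^{\ep_n} v(q^{\ep_n}) \partial_x\varphi \,dx\,dt + \int_{\mathbb{R}} \rho_0(x)\varphi(0,x)\,dx = 0.
\end{equation*}
Dominated convergence using $v\in\mathbf{C}^2$, the uniform $\mathbf{L}^\infty$ bound, and the convergences $\rho^{\ep_n}\to\rho^\star$, $q^{\ep_n}\to\rho^\star$ in $\mathbf{L}^1_{\mathrm{loc}}$ (hence a.e.\ along a subsequence) lets us identify the limit of the flux term as $\rho^\star v(\rho^\star) \partial_x\varphi$. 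Since $\rho_0$ is independent of $n$, the initial data term is preserved, and we conclude that $\rho^\star$ satisfies~\eqref{eq:WeakSolnDef} with $q$ replaced by $\rho^\star$, i.e.\ $\rho^\star$ is a weak solution of~\eqref{eq:local_model}.

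The main obstacle is verifying the $\ep$-independent BV control on $q^\ep$ in both space and time needed to justify $\|\rho^\ep - q^\ep\|_{\mathbf{L}^1_{\mathrm{loc}}} = O(\ep)$. For the spatial derivative this follows directly by differentiating~\eqref{eq:model_nonlocal_3} and using the BV bound on $\rho^\ep$; for the time derivative some care is required because $\partial_t q^\ep$ picks up an $\ep^{-1}$ factor from $w_\ep'$ in the convolution, but this is exactly balanced by the $\ep$ factor in $\rho^\ep - q^\ep = \gamma\ep\bigl(\partial_t q^\ep - \gamma^{-1}\partial_x q^\ep\bigr)$, and the relaxation equation itself is the cleanest way to see that the combination $\partial_t q^\ep - \gamma^{-1}\partial_x q^\ep$ is uniformly bounded in $\mathbf{L}^1_{\mathrm{loc}}$ of order $\ep^{-1}\|\rho^\ep - q^\ep\|_{\mathbf{L}^1_{\mathrm{loc}}}$; thus one needs a self-consistent argument (or a direct estimate on $\rho^\ep - q^\ep$ using integration by parts in the convolution) rather than a circular one.
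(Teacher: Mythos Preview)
Your overall strategy coincides with the paper's: BV compactness yields $\rho^{\ep_n}\to\rho^\star$ in $\mathbf{L}^1_{\mathrm{loc}}$, the crux is the estimate $\|\rho^\ep-q^\ep\|_{\mathbf{L}^1([0,T]\times\mathbb{R})}=O(\ep)$, and then one passes to the limit in \eqref{eq:WeakSolnDef}. The gap you flag is genuine, and your diagnosis is accurate: the assertion that $\partial_t q^\ep$ is uniformly bounded as a measure is false (the relaxation equation itself shows it contains $(\gamma\ep)^{-1}(\rho^\ep-q^\ep)$), and invoking \eqref{model_relax_2} to bound the combination $\partial_t q^\ep-\gamma^{-1}\partial_x q^\ep$ is circular.

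The paper resolves this precisely by the ``direct estimate using the convolution'' you allude to. From \eqref{eq:model_nonlocal_3} one writes
\[
q^\ep(t,x)-\rho^\ep(t,x)=\int_0^{t/\gamma}\bigl[\rho^\ep(t-\gamma s,x+s)-\rho^\ep(t,x)\bigr]\,w_\ep(s)\,ds+(\text{two boundary terms involving }\rho_0),
\]
expresses the bracket as $\int_0^s(\partial_x-\gamma\partial_t)\rho^\ep(t-\gamma\sigma,x+\sigma)\,d\sigma$, and integrates over $[0,T]\times\mathbb{R}$. The uniform BV bound from Theorem~\ref{thm:BV_bound} controls the $\mathbf{L}^1$ norm of $(\partial_x-\gamma\partial_t)\rho^\ep$ on each time slice, and the moment identity $\int_0^\infty s\,w_\ep(s)\,ds=\ep$ supplies the factor of $\ep$. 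Equivalently, you can observe that $q^\ep-\rho^\ep=\ep\,\partial_y q^\ep$ with $\partial_y=\partial_x-\gamma\partial_t$, and bound $\partial_y q^\ep$ in $\mathbf{L}^1$ directly from the convolution formula: this particular directional derivative passes through the kernel onto $\partial_y\rho^\ep$ without producing any $w_\ep'$, so no $\ep^{-1}$ appears and the argument is not circular. Either route closes the gap; the rest of your proof then goes through as written.
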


\begin{proof}
By Theorem~\ref{thm:WeakSolnExistence} and Theorem~\ref{thm:BV_bound}, the family of solutions $\{\rho^\ep\}_{\ep>0}$ is uniformly bounded in $\mathrm{BV}_{\mathrm{loc}}([0,\infty) \times \mathbb{R})$. As a consequence, the family $\{\rho^\ep\}_{\ep>0}$ is precompact in the $\mathbf{L}^1_{\mathrm{loc}}$ norm (see \cite{evans2018measure}).
Then we can select a sequence $\ep_n\to0$ such that $\rho^{\ep_n}\to\rho^\star$ in $\mathbf{L}^1_{\mathrm{loc}}([0,\infty)\times\mathbb{R})$, where the limit function $\rho^\star\in\mathbf{L}^\infty([0,\infty)\times\mathbb{R})$.

Now we claim that
\begin{align}\label{eq:L1estimate_q_minus_rho}
  \int_0^T \int_{\mathbb{R}} |q^\ep(t,x) - \rho^\ep(t,x)| \,dxdt \leq CT\ep \quad \forall T>0,
\end{align}
where the constant $C=C\left(\gamma,v,\rho_{\mathrm{min}}^{-1},\mathrm{TV}(\rho_0) \right)$ is independent of $\ep$.
Indeed, by \eqref{eq:model_nonlocal_3} we can write
\begin{align*}
 & q^\ep(t,x) - \rho^\ep(t,x) \\=& \int_0^{t/\gamma} (\rho^\ep(t-\gamma s,x+s) - \rho^\ep(t,x)) w_\ep(s) \,ds + \int_{t/\gamma}^\infty (\rho_0(x+s) - \rho_0(x)) w_\ep(s) \,ds\\
  &+ (\rho_0(x) - \rho^\ep(t,x)) \int_{t/\gamma}^\infty  w_\ep(s) \,ds,
\end{align*}
where $w_\ep(s)=\ep^{-1}e^{-s/\ep}$.
Integrating the above inequality on $[0,T]\times\mathbb{R}$ and applying Theorem~\ref{thm:BV_bound}, we obtain that
\begin{align*}
  \int_0^T \int_{\mathbb{R}} |q^\ep(t,x) - \rho^\ep(t,x)| \,dxdt \leq J_1 + J_2 + J_3,
\end{align*}
where
\begin{align*}
  J_1 =& \int_0^T \int_0^{t/\gamma} \int_0^s \left(\int_{\mathbb{R}} |(\partial_x-\gamma\partial_t) \rho^\ep(t-\gamma \sigma,x+\sigma)| \,dx \right) w_\ep(s)\,d\sigma dsdt \\
  \leq& (1+\gamma) C_1\left(\gamma,v,\rho_{\mathrm{min}}^{-1}\right) \mathrm{TV}(\rho_0) \cdot T\int_0^\infty s w_\ep(s) \,ds \\
  =& (1+\gamma) C_1\left(\gamma,v,\rho_{\mathrm{min}}^{-1}\right) \mathrm{TV}(\rho_0) \cdot T\ep,
\end{align*}
\begin{align*}
  J_2 =& \int_0^T \int_{t/\gamma}^\infty \int_0^s \left( \int_{\mathbb{R}} |\partial_x\rho_0(x+\sigma)| \,dx \right) w_\ep(s) \,d\sigma dsdt \\
  \leq& \mathrm{TV}(\rho_0) \cdot T\int_0^\infty s w_\ep(s) \,ds \\
  =& \mathrm{TV}(\rho_0) \cdot T \ep,
\end{align*}
and
\begin{align*}
  J_3 =& \int_0^T \left( \int_0^t \left( \int_{\mathbb{R}} |\partial_t\rho^\ep(\tau,x)| \,dx \right) \,d\tau \int_{t/\gamma}^\infty w_\ep(s) \,ds \right) dt \\
  \leq& C_1\left(\gamma,v,\rho_{\mathrm{min}}^{-1}\right) \mathrm{TV}(\rho_0) \int_0^T t e^{-\frac{t}{\gamma\ep}} \,dt\\
  \leq& C_1\left(\gamma,v,\rho_{\mathrm{min}}^{-1}\right) \mathrm{TV}(\rho_0) \cdot \gamma T \ep.
\end{align*}
Combining the above inequalities we get the desired estimate \eqref{eq:L1estimate_q_minus_rho}.

Therefore by \eqref{eq:L1estimate_q_minus_rho}
and the convergence of $\rho^{\ep_n}\to\rho^\star$, we get $q^{\ep_n}\to\rho^\star$ in
 $\mathbf{L}^1_{\mathrm{loc}}([0,\infty) \times \mathbb{R})$ as $\ep_n\to0$.
By passing through the limit in \eqref{eq:WeakSolnDef}, we deduce that $\rho^\star$ is a weak solution of \eqref{eq:local_model}.
\end{proof}

%%%%%%%%%%%%%%%%%%%%%%%%%%%%%%%%%%%%%%%
\section{Entropy admissibility of the limit solution}\label{sec:entropy_admissbility}
%%%%%%%%%%%%%%%%%%%%%%%%%%%%%%%%%%%%%%%

In this section, we show that the weak solution to the local model \eqref{eq:local_model} obtained from the limit as $\varepsilon \to 0$ of a sequence of weak solutions to \eqref{eq:model_nonlocal_1}-\eqref{eq:model_nonlocal_3} is in fact the entropy admissible solution. This completes the theory of nonlocal-to-local limit from \eqref{eq:model_nonlocal_1}-\eqref{eq:model_nonlocal_3} to \eqref{eq:local_model} in the case of exponential kernels.

\begin{proof}[Proof of Theorem \ref{thm:limit_sol_entropy}]
Following a similar approach as in \cite{bressan2020entropy}, it suffices to establish the entropy inequality for one convex entropy, see also~\cite{dafermos2005hyperbolic}.
For this purpose, we introduce the following entropy-entropy flux pair:
\begin{align}\label{eq:entropy_pair}
  \eta(\rho) = \int_0^\rho r (1 + \gamma v(r)) \,dr, \qquad
  \psi(\rho) = \int_0^\rho r (1 + \gamma v(r)) (v(r) + r v'(r)) \,dr .
\end{align}
It is straightforward to verify that $\psi'(\rho) = \eta'(\rho) (\rho v(\rho))'$, and that $\eta(\rho)$ is strictly convex.
We claim the following entropy inequality for the nonlocal solution $\rho^\ep$ of \eqref{eq:model_nonlocal_1}-\eqref{eq:model_nonlocal_3}:
\begin{align}\label{eq:ei}
 & \int_0^\infty \int_{\mathbb{R}} \eta(\rho^\ep(t,x)) \partial_t\varphi(t,x) + \psi(\rho^\ep(t,x)) \partial_x\varphi(t,x) \,dxdt \notag\\& \geq - C\left(\gamma,v,\rho_{\mathrm{min}}^{-1},\mathrm{TV}(\rho_0),\varphi \right) \ep,
\end{align}
for all nonnegative test functions $\varphi\in\mathbf{C}^1_{\mathrm{c}} ((0,\infty)\times \mathbb{R})$,
where the constant $C=C\left(\gamma,v,\rho_{\mathrm{min}}^{-1},\mathrm{TV}(\rho_0),\varphi \right)$ is independent of $\ep$.
Assuming this claim, any limit solution $\rho^\ast$ obtained following Theorem~\ref{thm:limit_sol} satisfies the entropy inequality
\begin{align}\label{eq:entropy_inequality}
  \int_0^\infty \int_{\mathbb{R}} \eta(\rho^\ast(t,x)) \partial_t\varphi(t,x) + \psi(\rho^\ast(t,x)) \partial_x\varphi(t,x) \,dxdt \geq 0
\end{align}
for all nonnegative test functions $\varphi\in\mathbf{C}^1_{\mathrm{c}} ((0,\infty)\times \mathbb{R})$,
and thus $\rho^\ast$ is the unique entropy admissible solution of \eqref{eq:local_model}.

Now we prove the inequality \eqref{eq:ei}. Let us first assume that  $\rho_0$ is Lipschitz continuous and show \eqref{eq:ei} for Lipschitz solutions.
For simplicity we %will
omit the superscript $\ep$ in $\rho^\ep$.
The equation \eqref{eq:model_nonlocal_1} can be rewritten as
\begin{align}\label{eq:entropy_limit_form}
  \partial_t \rho + \partial_x (\rho v(\rho)) = \partial_x (\rho (v(\rho) - v(q))).
\end{align}
For any nonnegative test function $\varphi\in\mathbf{C}^1_{\mathrm{c}} \left( (0,\infty)\times \mathbb{R} \right)$, multiplying $\rho(1+\gamma v(\rho))\varphi$ on both sides of \eqref{eq:entropy_limit_form} gives
\begin{align}\label{eq:entropy_limit_form_a}
  ( \partial_t \eta(\rho) + \partial_x \psi(\rho) ) \varphi = \rho (1+\gamma v(\rho)) \partial_x (\rho (v(\rho) - v(q))) \varphi.
\end{align}
Using again the directional derivative notation $\partial_y = \partial_x  - \gamma \partial_t$, we obtain the identity $\rho = q - \ep \partial_y q$.
Then \eqref{eq:entropy_limit_form_a} becomes
\begin{align}\label{eq:entropy_limit_form_b}
  &( \partial_t \eta(\rho) + \partial_x \psi(\rho) ) \varphi \notag \\
  =& ~ \gamma \partial_t (\rho^2 (v(\rho) - v(q))) \varphi + \frac12 \gamma \partial_x \left( \rho^2 \left( v(\rho)^2 - v(q)^2 \right) \right) \varphi + \rho \partial_y (\rho (v(\rho) - v(q))) \varphi.
\end{align}
Integrating \eqref{eq:entropy_limit_form_b}
and using integration by parts, we get
\begin{align*}
  \int_0^\infty \int_{\mathbb{R}} \eta(\rho) \partial_t\varphi + \psi(\rho) \partial_x\varphi \, dxdt = J_1 + J_2+ J_3,
\end{align*}
where
\begin{align*}
  J_1 &= \gamma \int_0^\infty \int_{\mathbb{R}} \rho^2 (v(\rho) - v(q)) \partial_t\varphi \,dxdt, \qquad\\
  J_2 &= \frac12 \gamma \int_0^\infty \int_{\mathbb{R}} \rho^2 \left( v(\rho)^2 - v(q)^2 \right) \partial_x\varphi \,dxdt,
\end{align*}
and
\begin{align*}
  J_3 &= \int_0^\infty \int_{\mathbb{R}} \rho \partial_y (\rho (v(q) - v(\rho))) \varphi \,dxdt \\
  &= \frac12 \int_0^\infty \int_{\mathbb{R}} \partial_y (\rho^2) (v(q) - v(\rho)) \varphi \,dxdt + \int_0^\infty \int_{\mathbb{R}} \rho^2 \partial_y (v(q) - v(\rho)) \varphi \,dxdt \\
  &= \frac12 \int_0^\infty \int_{\mathbb{R}} \rho^2 (v(\rho) - v(q)) \partial_y\varphi  \,dxdt + \frac12 \int_0^\infty \int_{\mathbb{R}} \rho^2 \partial_y (v(q) - v(\rho)) \varphi \,dxdt \\
  &\doteq \frac12 J_4 + \frac12 J_5.
\end{align*}
Repeatedly using the identity $\rho = q - \ep \partial_y q$ and integrating by parts, we compute
\begin{align*}
  J_5 &= \int_0^\infty \int_{\mathbb{R}} \rho^2 (v'(q) \partial_y q - v'(\rho) \partial_y \rho ) \varphi \,dxdt \\
  &= \int_0^\infty \int_{\mathbb{R}} q^2 v'(q) \partial_y q \varphi \,dxdt\\&\quad - \int_0^\infty \int_{\mathbb{R}} \rho^2 v'(\rho) \partial_y \rho \varphi \,dxdt - \ep \int_0^\infty \int_{\mathbb{R}} (\rho+q) v'(q) (\partial_y q)^2 \varphi \,dxdt \\
  &= \int_0^\infty \int_{\mathbb{R}}  (W(\rho)-W(q)) \partial_y\varphi \,dxdt - \ep \int_0^\infty \int_{\mathbb{R}} (\rho+q) v'(q) (\partial_y q)^2 \varphi \,dxdt \\
  &\doteq J_6 + J_7,
\end{align*}
with $W(\rho) \doteq \int_0^\rho r^2v'(r)\,dr$.

Now we have
\begin{align*}
  \int_0^\infty \int_{\mathbb{R}} \eta(\rho) \partial_t\varphi + \psi(\rho) \partial_x\varphi \, dxdt = J_1 + J_2+ \frac12 J_4 + \frac12 J_6 + \frac12 J_7.
\end{align*}
Since $\rho,q,\varphi\geq0$ and $v'(q)\leq0$, we have $J_7\geq0$. Moreover, it follows from \eqref{eq:L1estimate_q_minus_rho} that
\[ |J_1| + |J_2| + |J_4| + |J_6| \leq C_1\left(\gamma,v,\rho_{\mathrm{min}}^{-1},\mathrm{TV}(\rho_0) \right) C_2(\mathrm{supp}\varphi,\norm{\partial_t\varphi}_\infty,\norm{\partial_x\varphi}_\infty) \ep. \]
Then we obtain the inequality \eqref{eq:ei}.

The inequality \eqref{eq:ei} for initial data $\rho_0 \in \mathcal{X}$ follows from an approximation argument as in the proof of Theorem~\ref{thm:WeakSolnExistence}.
\end{proof}

Let us make some remarks on entropy pairs for the relaxation
system \eqref{model_relax_1}-\eqref{model_relax_2} and its equilibrium approximation \eqref{eq:local_model}. In the proof of Theorem~\ref{thm:limit_sol} we base the analysis directly on
the nonlocal model \eqref{eq:model_nonlocal_1}-\eqref{eq:model_nonlocal_3},
and do not rely on the rigorous justification of the entropy inequality for the relaxation
system \eqref{model_relax_1}-\eqref{model_relax_2}.
However, we remark that some  intuitive analysis based on the relaxation system \eqref{model_relax_1}-\eqref{model_relax_2} offers insight to our choice of the
entropy pair \eqref{eq:entropy_pair}.

Following the paradigm described in \cite{Chen1994}, if $(\eta,\psi)$ is any entropy-entropy flux pair for the limiting conservation law \eqref{eq:local_model}, one can construct an entropy-entropy flux pair $(H,\Psi)$ for the relaxation system \eqref{model_relax_1}-\eqref{model_relax_2} such that
\begin{align*}
  \int_0^\infty \int_{\mathbb{R}} H(\rho,q) \partial_t\varphi + \Psi(\rho,q) \partial_x\varphi + (\gamma \ep)^{-1} \partial_{q} H(\rho,q) (\rho-q) \varphi \, dxdt \geq 0,
\end{align*}
for any test function $\varphi \geq 0$, and when $\rho=q$ one has
\begin{align*}
  H(\rho,\rho)=\eta(\rho), \quad \Psi(\rho,\rho)=\psi(\rho), \quad \partial_qH(\rho,\rho)=0\,.
\end{align*}
Therefore, it holds
\begin{align*}
  &\int_0^\infty \int_{\mathbb{R}} \eta(\rho) \partial_t\varphi + \psi(\rho) \partial_x\varphi \, dxdt \\\geq & \int_0^\infty \int_{\mathbb{R}} [H(\rho,\rho)-H(\rho,q)] \partial_t\varphi + [\Psi(\rho,\rho)-\Psi(\rho,q)] \partial_x\varphi \, dxdt \\
  & -(\gamma \ep)^{-1} \int_0^\infty \int_{\mathbb{R}} [\partial_{q} H(\rho,q) - \partial_{q} H(\rho,\rho)] (\rho-q) \varphi \, dxdt.
\end{align*}
Assuming $H$ and $\Psi$ are $\mathbf{C}^2$ smooth, the right hand side is $O(\ep)$ when $\rho-q \approx \ep$.

Provided any convex $\eta$, one can construct $H$ by solving the following hyperbolic Cauchy problem \cite{Chen1994}:
\begin{align*}
  &\rho v'(q)\partial_{\rho\rho}H - (v(q)+\gamma^{-1})\partial_{\rho q}H=0,\\
  &H(\rho,\rho)=\eta(\rho),\quad \partial_qH(\rho,\rho)=0.
\end{align*}
We note that, with the simple choice of convex entropy
$\eta(\rho)=\frac12\rho^2$,
the analytic solution $H$ may be complicated.
Instead, if we choose a different convex entropy function:
\[ \eta(\rho) = \int_0^\rho r (1 + \gamma v(r)) \,dr
\]
we obtain a simple solution for $H$ as
\[H(\rho,q)=\eta(\rho)+\frac\gamma2\rho^2[v(q)-v(\rho)].\]
This motivates our choice of the entropy-entropy flux pair in
\eqref{eq:entropy_pair}.

%%%%%%%%%%%%%%%%%%%%%%%%%%%%%%%%%%%%%%%
\section{Concluding remarks}\label{sec:final}
%%%%%%%%%%%%%%%%%%%%%%%%%%%%%%%%%%%%%%%

In this paper we propose a space-time nonlocal conservation law modelling traffic flow.
The proposed model \eqref{eq:model_nonlocal_1}-\eqref{eq:model_nonlocal_2} extends the classical LWR model by introducing nonlocal velocities in the flux function.
To fit realistic traffic scenarios, the model considers time delays in the long-range inter-vehicle communication, and the model parameter $\gamma$ corresponds to the temporal nonlocal effects. In the limit as $\gamma\to 0$, our analysis shows that the model recovers a model involving only spatial nonlocality, which has been  extensively studied in the literature.

We provide well-posedness theories of the proposed model \eqref{eq:model_nonlocal_1}-\eqref{eq:model_nonlocal_2} under suitable assumptions on model parameters and the past-time condition.
Furthermore, in the special case of exponential weight kernels,
we prove convergence from solutions of the nonlocal model to the unique entropy admissible solution of the local limit equation, i.e.\ the LWR model.
The results established in this paper provide a rigorous underpinning in potential implementation of the space-time nonlocal model for the modelling of nonlocal traffic flows.

Let us make some concluding remarks on possible  generalizations of the model.
An alternative model to \eqref{eq:model_nonlocal_1}-\eqref{eq:model_nonlocal_2}
is to instead take a weighted average of vehicle velocity.
To be precise,
\begin{align*}
  & \partial_t\rho(t,x) + \partial_x(\rho(t,x) V(t,x))= 0, \\
\mbox{where}\qquad  & V(t,x) = \int_0^\infty v(\rho(t-\gamma s, x+s)) w(s)\,ds.
\end{align*}
For this model, we expect that the well-posedness and nonlocal-to-local limit
can be established in a similar fashion.
Furthermore, in future works we hope to consider more general cases where the traveling speed of nonlocal traffic information depends on additional quantities in the model.

We would also like to conduct more mathematical analysis. In this paper we show convergence of solutions of the space-time nonlocal model to the entropy admissible solution of the local model in the case of exponential weight kernels. The convergence result may be established on the nonlocal quantity $q$ for more general initial data and kernels. Such a result has been established for the nonlocal-in-space model \eqref{eq:model_nonlocal_space_1}-\eqref{eq:model_nonlocal_space_2} in \cite{colombo2022nonlocal}. We hope to show more nonlocal-to-local convergence results for the space-time nonlocal model along that direction.
Furthermore, understanding the behavior -- such as the existence, uniqueness and stability -- of traveling wave solutions of the space-time nonlocal model will shed light on the long time behavior and stability of shock waves.
In the case of exponential kernels, this is equivalent to the study of traveling waves for the relaxation system,
which could be easier to analyze.
For general kernels, an integro-differential equation is satisfied by the traveling wave profiles.
In all cases, we expect that traveling waves are local attractors for solutions.

\bibliographystyle{siam}
\bibliography{ref}

\end{document}